\newtheorem{theorem}{Theorem}[section]
\newtheorem{lemma}[theorem]{Lemma}
\newtheorem{proposition}[theorem]{Proposition}
\theoremstyle{definition}
\theoremstyle{remark}
\newtheorem{remark}[theorem]{Remark}
\numberwithin{equation}{section}
\subjclass[2020]{Primary  34D20, 35B10; Secondary  34C23, 34C25. }
\keywords{Patch model,  destabilization, periodic solution, bifurcation. }
\date{\today}
\thanks{ \dag Email: schen@ccnu.edu.cn.}
\thanks{\ddag Corresponding author: hjc@mail.ccnu.edu.cn.}
\begin{document}

\title[Destabilization of synchronous periodic solutions]
{Destabilization of synchronous periodic solutions for patch models:
a criterion by period functions}
\thanks{This work was partly supported by the National Natural Science Foundation of China (Grant No. 12101253, 12231008)
and the Scientific Research Foundation of CCNU (Grant No. 31101222044).}

\maketitle

\vskip 1pt
\centerline{\scshape Shuang Chen$^{\,\dag}$, Jicai Huang$^{\,\ddag}$}
\medskip
{\footnotesize
\centerline{School of Mathematics and Statistics, and Hubei Key Laboratory of Mathematical Sciences,}
\centerline{Central China Normal University, Wuhan, Hubei 430079, China}
} 

\medskip

\begin{abstract}
In this paper, we study the destabilization of synchronous periodic solutions
for patch models. By applying perturbation theory for matrices,
we derive asymptotic expressions of the Floquet spectra
and provide a destabilization criterion for synchronous periodic solutions arising from
closed orbits or degenerate Hopf bifurcations in terms of period functions.
Finally, we apply the main results to the well-known two-patch Holling-Tanner model.
\end{abstract}

\section{Introduction}

Patch models have been extensively used to understand the spatial spread of infectious diseases and the effect of population dispersal on
the total abundance and the total populations distribution (see, for instance, \cite{Allen-07,Arumugam2020,Gao-12,Gao-19,Gao-20,Gao-Lou-21,Lengyel-Epstein-91,van-18,Wang-Zhao-04}
and the references therein).

In this paper,
we investigate a general $n$-patch model with cross-diffusion-like couplings:

\begin{eqnarray}\label{gpath}
\ \
\begin{aligned}
\frac{du_{1}^{(j)}}{dt} &=
    \delta \sum_{l=1}^{m}\sum_{i=1}^{n} \left(d_{1l}\left(u_{l}^{(i)}-u_{l}^{(j)}\right)\right)
     +f_{1}(u_{1}^{(j)},u_{2}^{(j)},\cdot\cdot\cdot, u_{m}^{(j)}),\ \ j=1,2,...,n,\\
\frac{du_{2}^{(j)}}{dt}  &=
    \delta \sum_{l=1}^{m}\sum_{i=1}^{n} \left(d_{2l}\left(u_{l}^{(i)}-u_{l}^{(j)}\right)\right)
     +f_{2}(u_{1}^{(j)},u_{2}^{(j)},\cdot\cdot\cdot, u_{m}^{(j)}),\ \ j=1,2,...,n,\\
&\ \  \  \cdots \cdots   \\
\frac{du_{m}^{(j)}}{dt}  &=
    \delta \sum_{l=1}^{m}\sum_{i=1}^{n} \left(d_{ml}\left(u_{l}^{(i)}-u_{l}^{(j)}\right)\right)
     +f_{m}(u_{1}^{(j)},u_{2}^{(j)},\cdot\cdot\cdot, u_{m}^{(j)}),\ \ j=1,2,...,n.
\end{aligned}
\end{eqnarray}
In the setting of  population dynamics,
the state variables $u_{i}^{(j)}$ are the  population densities of $i$-th species in the $j$-th patch,
$n\geq 2$ denotes the number of the patches,
$d_{ij}$ are  the diffusion coefficients,
and $\delta>0$ indicates the coupling strength.
Let
\begin{equation*}
\mathcal{U}(t)=
\left(
\begin{array}{c}
U^{1}\\
U^{2}\\
\vdots \\
U^{n}
\end{array}
\right), \
\mathcal{F}(\mathcal{U}(t))=
\left(
\begin{array}{c}
F(U^{1})\\
F(U^{2})\\
\vdots \\
F(U^{n})
\end{array}
\right), \
\mathcal{E}=\left(
\begin{array}{cccc}
(n-1)E & -E & \cdots & -E\\
-E & (n-1)E & \cdots & -E\\
\vdots & \vdots &\ddots & \vdots \\
-E  & -E  & \cdots & (n-1)E
\end{array}
\right),
\end{equation*}
where $U^{j}=(u_{1}^{(j)},u_{2}^{(j)},\cdot\cdot\cdot, u_{m}^{(j)})^{T}$, $F=(f_1,f_2,\cdot\cdot\cdot, f_m)^{T}$
and $E=(d_{kl})_{m\times m}$ is a $m\times m$ matrix.
Then we can rewrite patch model \eqref{gpath} in the compact form
\begin{eqnarray}\label{eq-patch}
\frac{d}{dt}\mathcal{U}(t)=-\delta \mathcal{E} \mathcal{U}(t)+\mathcal{F}(\mathcal{U}(t)).
\end{eqnarray}
Throughout this paper, we assume that  $f_{i}$ ($i=1,...,m$) are sufficiently smooth,
and use $^{T}$ to denote the transpose of a matrix or a vector.

A solution $\mathcal{U}_{0}$ of patch model \eqref{eq-patch} is called a synchronous periodic solution
if $\mathcal{U}_{0}=(\phi^{T},...,\phi^{T})^{T}$ and
$\phi:\mathbb{R}\to \mathbb{R}^{m}$ is a periodic function with the minimum period $P>0$.
In this case, this periodic function $\phi$ is also a periodic solution of the underlying kinetic system
\begin{eqnarray}\label{ODE-alpha}
\frac{d U}{dt}=:\dot U=F(U),
\end{eqnarray}
where $U=(u_{1},u_{2},...,u_{m})^{T}\in\mathbb{R}^{m}$ and $F(U)=(f_1(U),f_2(U),\cdot\cdot\cdot, f_m(U))^{T}$.
Furthermore, if $\phi$ is a (Lyapunov) stable periodic solution of the kinetic system \eqref{ODE-alpha},
then the corresponding synchronous periodic solution $\mathcal{U}_{0}$ is also stable
in system \eqref{eq-patch} without the cross-diffusion-like couplings.
A natural question arises:
\begin{enumerate}
\item[$\bullet$] Can the synchronous periodic solution $\mathcal{U}_{0}(t)$ become unstable in system \eqref{eq-patch} with the cross-diffusion-like couplings?
\end{enumerate}
The instability driven by the cross-diffusion-like couplings  is called
the destabilization of synchronous periodic solutions for patch model \eqref{gpath}.
It is also called the Turing instability of periodic solutions \cite{Yi-21},
in order to celebrate  Turing's discovery in \cite{Turing-52}, i.e.,
diffusion could destabilize stable equilibrium solutions of reaction-diffusion systems.

We are interested in the destabilization of synchronous periodic solutions for patch model \eqref{gpath}.
This is directly motivated by various phenomena and problems arising from real-world applications.
For example,
\cite{Arumugam-2021} recently shown that unstable states play a vital role in transient dynamics and
the resilience of ecological systems to environmental change.
\cite{Lengyel-Epstein-91} once found that
the destabilization of periodic solutions in chemically reacting systems
can lead to complicated oscillations and chaos.
It is significant to understand the effect of the connectivity of subregions on infectious disease transmission \cite{Gao-12,Gao-Lou-21}.
Along this direction,
we also need to further investigate the impact of the cross-diffusion-like couplings on periodic oscillations.

The main obstacle to investigate the destabilization of synchronous periodic solutions is that
it is difficult to analyze the Floquet spectra of the related linearizations about synchronous periodic solutions.
The obstacle becomes evident after we present the linearization of patch model \eqref{gpath}
about a synchronous periodic solution $\mathcal{U}_{0}$, i.e., the following periodic system
\begin{eqnarray}\label{eq-patch-var}
\frac{d}{dt}Y(t)=(-\delta \mathcal{E} +\widehat{J}(t))Y(t),
\end{eqnarray}
where $Y(t)\in\mathbb{R}^{mn}$, $\widehat{J}(t)={\rm diag}(J(t),J(t),\cdot\cdot\cdot,J(t))$ and $J(t)=F_{U}(\phi(t))$.
Note that general patch models always involve multiple patches.
Then the related periodic system \eqref{eq-patch-var} is high-dimensional, and it is challenging
to give the explicit expressions of the Floquet spectra
for high-dimensional periodic systems, even for three-dimensional systems.
See some reviews in such as \cite{Chen-Duan-22,Chen-Huang-23,Hale-80,Yi-21}.

Bifurcations of invariant sets (e.g. equilibria, periodic solutions, homoclinic loops, heteroclinic loops, etc)
in a parametrically perturbed system give rise to periodic solutions.
The period of bifurcating periodic solutions arising from an invariant set
can be well defined in terms of system parameters when parameters are near a bifurcation point.
This gives the period function of bifurcating periodic solutions \cite{Cod-Lev-55,Gasull-05}.
In our recent work \cite{Chen-Huang-23},
we provided a criterion for the destabilization of synchronous periodic solutions bifurcating from double homoclinic loops.
Based on the Lyapunov-Schmidt reduction, we obtained the characteristic function to determine the Floquet spectra associated with synchronous periodic solutions,
while the period functions of bifurcating periodic solutions are not well-defined at bifurcation points \cite{Chen-Huang-23,Gasull-05}.
It is also interesting and challenging to deal with bifurcating periodic solutions
whose period function is at least continuously differentiable.
Some typical examples include periodic solutions arising from the parametric perturbations of equilibria and limit cycles.
See \cite{Gasull-05} for instance.

Our goal is to further give criteria for the destabilization of synchronous periodic solutions
in the term of period functions if the related period functions are continuously differentiable.
Similar criteria were previously proposed
for the diffusion-derived instability of spatially homogeneous periodic solutions in  reaction-diffusion systems.
For example,
Maginu \cite{Maginu-79} in 1979 and Ruan \cite{Ruan-98a} in 1998
once considered the diffusion-derived instability of spatially homogeneous periodic solutions for reaction-diffusion systems
in the entire space.
They established the relation between the Floquet spectra and the period functions of bifurcating periodic solutions for
some certain perturbations of the kinetic systems.
After that, they gave criteria for
the instability of spatially homogeneous periodic solutions  in terms of  the dominant term of the related period functions
(see Lemma 2 in \cite{Maginu-79} and Formula (4.16) in \cite{Ruan-98a}).

In order to give a criterion for the destabilization,
we consider a single parametric perturbation of the  kinetic system \eqref{ODE-alpha} as follows:
\begin{eqnarray}\label{pert-ODE}
(I_{m}+\epsilon E)\frac{d U}{dt}=F(U),
\end{eqnarray}
where $\epsilon\in\mathbb{R}$ is a small parameter, and $E$ and $F$ are defined as in \eqref{eq-patch}.
If the kinetic system \eqref{ODE-alpha} has a hyperbolic periodic solution $\phi$,
then by the classical bifurcation theory \cite{Andronov-etal-73,Cod-Lev-55,Ye-86},
there exists a sufficiently small $\epsilon_{0}>0$ such that
the perturbed system \eqref{pert-ODE} admits a family of bifurcating periodic solutions $\phi_{\epsilon}$ for $\epsilon\in (-\epsilon_{0},\epsilon_{0})$
 that bifurcate from the hyperbolic periodic solution $\phi$.
Let $P(\epsilon)$ denote the minimum period of $\phi_{\epsilon}$, and call
$P(\cdot)$ the period function of the family of bifurcating periodic solutions $\phi_{\epsilon}$.
Furthermore, the period function $P(\epsilon)$ is continuously differentiable in the open set $(-\epsilon_{0},\epsilon_{0})$.
By applying the dominant term of the period function $P(\epsilon)$,
we give a criterion for the destabilization of synchronous periodic solutions (see Theorem \ref{lm-unstab-patch}).
The argument is mainly based on  the perturbation theory for matrices that was developed in our recent work \cite{Chen-Huang-23}.
Actually, we present the asymptotic expression of the related Floquet spectra for synchronous periodic solutions.

It is worth mentioning that our result  actually improves  Yi's work \cite{Yi-21}.
Yi recently introduced  a single parametric perturbation of patch model \eqref{gpath} as follows:
\begin{eqnarray}\label{eq:pert-patch}
(I_{mn}+\epsilon \mathcal{E})\frac{d}{dt}\mathcal{U}(t)=\mathcal{F}(\mathcal{U}(t)),
\end{eqnarray}
where $I_{mn}$ is the $mn\times mn$ identity matrix,
and $\mathcal{E}$ and $\mathcal{F}$ are defined as in \eqref{eq-patch}.
Under the assumption that
the perturbed patch model \eqref{eq:pert-patch}
possesses a family of bifurcating periodic solutions
$$((U_{p}^{1}(\cdot,\epsilon))^{T},(U_{p}^{2}(\cdot,\epsilon))^{T},...,(U_{p}^{n}(\cdot,\epsilon))^{T})^{T}$$
with period functions $\widehat{P}(\epsilon)$  that vanish asymptotically in one patch and persist in the other $(n-1)$ patches, i.e.,
\begin{eqnarray*}
((U_{p}^{1}(t,\epsilon))^{T},(U_{p}^{2}(t,\epsilon))^{T},...,(U_{p}^{n}(t,\epsilon))^{T})^{T}
\to ((\phi(t))^{T},(\phi(t))^{T},\cdot\cdot\cdot,(\phi(t))^{T},0)^{T}
 \ \ \ \mbox{ as }\ \epsilon\to 0,
\end{eqnarray*}
uniformly in $t\in\mathbb{R}$.
To determine the destabilization of synchronous periodic solutions for patch model \eqref{gpath},
following the idea of Maginu \cite{Maginu-79} and Ruan \cite{Ruan-98a},
Yi \cite{Yi-21} presented a criterion for the destabilization of
the synchronous periodic solution $\mathcal{U}_{0}=(\phi^{T},...,\phi^{T})^{T}$  in terms of
the first-order derivative of $\widehat{P}(\epsilon)$ at $\epsilon=0$.
Here, we give a simpler criterion  that only requires $P'(0)<0$,
where $P'(0)$ denotes the first-order derivative of $P(\epsilon)$ at $\epsilon=0$,
without any additional conditions. See Theorem 2.5.

The present paper  is devoted to the destabilization of synchronous periodic solutions for  patch models
and appears to be our second paper on this topic.
We refer  to our first paper \cite{Chen-Huang-23},
where the related period functions are not continuously differentiable at bifurcations points.
 Instead, here we explore the case of continuously differentiable period functions.
 The theory we developed is applied to give criteria for
the destabilization of synchronous periodic solutions,
bifurcating from closed orbits \cite{Urabe-54,Ye-86} and degenerate Hopf bifurcation \cite{Chow-etal-94,Far-etal-89}, in $n$-patch models
with two-dimensional kinetic systems.
It was once found that diffusion-driven instability of periodic solutions for reaction-diffusion systems
can not be induced by the identical diffusion rates \cite{Henry-81}.
As an easy by-product, we prove that patch model with the identical diffusion rates
never undergoes the destabilization of synchronous periodic solutions.

This paper is organized as follows.
The criterion for the destabilization of synchronous periodic solutions is given in section \ref{sec-mainresult},
and then we apply our results to general $n$-patch models with two-dimensional kinetic systems.
Sections \ref{sec:4.1} and \ref{sec-small}  are devoted to synchronous periodic solutions
arising from closed orbits and degenerate Hopf bifurcation, respectively.
Finally,  the well-known two-patch Holling-Tanner model is provided in Section \ref{sec:4.3} to  illustrate the main results.

\section{Destabilization of synchronous periodic solutions}
\label{sec-mainresult}

In this section,
we give the main result on the destabilization of synchronous periodic solutions.
The proof is based on two fundamental lemmas on the perturbation theory for matrices
 that were developed in our recent work \cite{Chen-Huang-23}.
We present them in Appendix A for convenience.

Let $\phi$ be a periodic solution with  the minimum period $P>0$ for the kinetic system \eqref{ODE-alpha}.
Then the linearization of the kinetic system \eqref{ODE-alpha} about this periodic solution $\phi$ is governed by
\begin{eqnarray}\label{ODE-var}
\frac{dU}{dt}=F_{U}(\phi(t)) U= J(t)U, \ \ \ \ \ \ U\in\mathbb{R}^{m}.
\end{eqnarray}
Throughout this section, we make the following assumption:
\begin{enumerate}
\item[{\bf (H)}] All Floquet multipliers $\gamma_{1},...,\gamma_{m}$ of the linearized system \eqref{ODE-var} satisfy
\begin{eqnarray}\label{cond-stab}
\gamma_{1}=1,\ \  |\gamma_{2}|< 1,..., |\gamma_{m}|< 1.
\end{eqnarray}
\end{enumerate}
Under this assumption, the periodic solution $\phi$ is stable with respect to the kinetic system \eqref{ODE-alpha}
and the synchronous periodic solution $\mathcal{U}_{0}(t)=(\phi(t)^{T},...,\phi(t)^{T})^{T}$ is also stable
with respect to patch model \eqref{eq:pert-patch} if $\delta=0$.
By the discussion in  the Introduction,
there exists a sufficiently small $\epsilon_{0}>0$ such that
the perturbed system \eqref{pert-ODE} with $\epsilon\in (-\epsilon_{0},\epsilon_{0})$
admits  a family of bifurcating periodic solutions $\phi_{\epsilon}$ for $\epsilon\in (-\epsilon_{0},\epsilon_{0})$
 that bifurcate from this stable solution $\phi$.
We use $P(\epsilon)$ to denote the related period function.

We are interested in the effect of the  cross-diffusion-like couplings on the stability of this synchronous periodic solution.
Consider the linearized system \eqref{eq-patch-var} of patch model \eqref{eq-patch} about the periodic solution $\mathcal{U}_{0}$.
 Note that  all Floquet multipliers of the linearized system \eqref{ODE-var}  satisfy \eqref{cond-stab}.
Then $n$ Floquet multipliers of system \eqref{eq-patch-var} with $\delta=0$  are one,
and all other Floquet multipliers  have  modulii less than one.
Let $\Psi(t,\delta)$ denote the principal fundamental matrix solution of system \eqref{eq-patch-var}.
Then we can compute that the kernel $\ker(\Psi(P, 0)-I_{mn})$ is spanned by the following vectors in $\mathbb{R}^{mn}$:
\begin{equation*}
\xi_{1}=
\left(
\begin{array}{c}
\phi_{1}(0)\\
0\\
\vdots \\
0
\end{array}
\right),\ \
\xi_{2}=
\left(
\begin{array}{c}
0\\
\phi_{1}(0)\\
\vdots \\
0
\end{array}
\right), \cdots, \
\xi_{n}=
\left(
\begin{array}{c}
0\\
0\\
\vdots \\
\phi_{1}(0)
\end{array}
\right),
\end{equation*}
where  $0$ is the zero vector in $\mathbb{R}^{m}$, $\xi_{j}\in\mathbb{R}^{mn}$ and $\phi_{1}$ is given by
\begin{eqnarray*}
\phi_{1}(t):=\frac{d}{dt}\phi(t), \ \ \ \ \ t\in\mathbb{R}.
\end{eqnarray*}
Here we use the fact that $\phi_{1}$ is a solution of the linearized system \eqref{ODE-var}.
Next we give an important property on the monodromy operator $\Psi(P,\delta)$.

\begin{lemma} \label{lm-expansion}
Suppose that $\phi(\cdot,\epsilon)$ $(\epsilon\in (-\epsilon_{0},\epsilon_{0}))$
are periodic solutions bifurcating from $\phi(\cdot)$ in the perturbed system \eqref{pert-ODE}.
Define
\begin{eqnarray*}
\tilde{\xi}_{1}:=(\xi_{1}+\xi_{2}+\cdot\cdot\cdot+\xi_{n}),\ \ \ \ \
\tilde{\xi}_{j}(\delta):=\frac{1}{n}(\xi_{j-1}-\xi_{n})+\delta(\eta_{j-1}-\eta_{n}),\ \ \ \ \ j=2,...,n,
\end{eqnarray*}
where $\delta\geq 0$ and $\eta_{j}$ for $j=1,...,n$ are given by
\begin{equation*}
\eta_{1}=
\left(
\begin{array}{c}
\frac{\partial}{\partial\epsilon}\phi(0,0)\\
0\\
\vdots \\
0
\end{array}
\right),\ \
\eta_{2}=
\left(
\begin{array}{c}
0\\
\frac{\partial}{\partial\epsilon}\phi(0,0)\\
\vdots \\
0
\end{array}
\right), \cdots, \
\eta_{n}=
\left(
\begin{array}{c}
0\\
0\\
\vdots \\
\frac{\partial}{\partial\epsilon}\phi(0,0)
\end{array}
\right).
\end{equation*}
Then
\begin{eqnarray}\label{Expand-1}
\Psi(P,\delta)\tilde{\xi}_{1}=\tilde{\xi}_{1}
\end{eqnarray}
and for sufficiently small $\delta\geq 0$,
\begin{eqnarray}\label{Expand-2}
\Psi(P,\delta)\tilde{\xi}_{j}(\delta)
=(1-\delta nP'(0))\tilde{\xi}_{j}(\delta)+O(\delta^{2}),\ \ \ \ \ j=2,...,n.
\end{eqnarray}

\end{lemma}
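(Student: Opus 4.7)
The plan is to exploit the block structure of $\mathcal{E}$ to decompose the variational equation \eqref{eq-patch-var} into synchronous and antisynchronous modes. Writing $Y = S + A$ with $S^j(t) := \frac{1}{n}\sum_{i=1}^{n}Y^i(t)$ (equal across patches) and $A^j(t) := Y^j(t) - S^j(t)$ (satisfying $\sum_j A^j \equiv 0$), the fact that $\mathcal{E}$ annihilates vectors constant across patches, and acts as $nE$ componentwise on the antisynchronous subspace, uncouples \eqref{eq-patch-var} into $\dot S^j = J(t) S^j$ and $\dot A^j = (J(t) - \delta n E)A^j$. Equation \eqref{Expand-1} is then immediate: $\tilde\xi_1$ is purely synchronous, so $\Psi(t,\delta)\tilde\xi_1$ is the stacked vector with $\phi_1(t)$ in every patch, and $\phi_1(P) = \phi_1(0)$ closes the argument.

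For \eqref{Expand-2}, observe that $\tilde\xi_j(\delta)$ lies in the antisynchronous subspace and is supported only in positions $j-1$ and $n$, where the entries are $\frac{1}{n}\phi_1(0) + \delta\,\tfrac{\partial}{\partial\epsilon}\phi(0,0)$ and its negative. Each patch-component therefore evolves under the same single $m$-dimensional fundamental matrix $\Phi(t,\delta)$ of $\dot a = (J(t) - \delta n E)a$, $\Phi(0,\delta) = I_m$, so the claim reduces to
\[
\Phi(P,\delta)\!\left[\tfrac{1}{n}\phi_1(0) + \delta\,\chi(0)\right]
= \bigl(1 - \delta n P'(0)\bigr)\!\left[\tfrac{1}{n}\phi_1(0) + \delta\,\chi(0)\right] + O(\delta^2),
\]
with $\chi(0) := \tfrac{\partial}{\partial\epsilon}\phi(0,0)$. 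Regular perturbation gives $\Phi(P,\delta) = \Phi_0(P) + \delta\Phi_1(P) + O(\delta^2)$, and variation of parameters yields $\Phi_1(P) = -n\Phi_0(P)\int_0^P \Phi_0(s)^{-1} E \Phi_0(s)\,ds$. Using $\Phi_0(s)\phi_1(0) = \phi_1(s)$ (since $\phi_1$ solves \eqref{ODE-var}), this produces
\[
\Phi(P,\delta)\phi_1(0) = \phi_1(0) - \delta n\,\Phi_0(P)\!\int_0^P \Phi_0(s)^{-1} E \phi_1(s)\,ds + O(\delta^2).
\]

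The key step is to express this integral in terms of $P'(0)$. To this end I would rescale $\phi_\epsilon$ to an $\epsilon$-independent period interval, differentiate $(I_m + \epsilon E)\dot\phi_\epsilon = F(\phi_\epsilon)$ at $\epsilon = 0$, and unscale to get the inhomogeneous variational equation
\[
\dot\chi(t) = J(t)\chi(t) + \tfrac{P'(0)}{P}\phi_1(t) - E\phi_1(t), \qquad \chi(P) = \chi(0).
\]
Variation of parameters applied to this equation, combined with $\Phi_0(s)^{-1}\phi_1(s) = \phi_1(0)$ (hence $\int_0^P \Phi_0(s)^{-1}\phi_1(s)\,ds = P\phi_1(0)$), $\Phi_0(P)\phi_1(0) = \phi_1(0)$, and the periodicity of $\chi$, forces the identity
\[
\Phi_0(P)\!\int_0^P \Phi_0(s)^{-1} E \phi_1(s)\,ds = P'(0)\phi_1(0) + (\Phi_0(P) - I)\chi(0).
\]
Substituting this back, adding $\delta\,\Phi(P,\delta)\chi(0) = \delta\,\Phi_0(P)\chi(0) + O(\delta^2)$, and simplifying with $(\Phi_0(P) - I)\chi(0) = \Phi_0(P)\chi(0) - \chi(0)$ cancels the unwanted term and produces the displayed approximate eigenvalue identity; a componentwise assembly over positions $j-1$ and $n$ then delivers \eqref{Expand-2}.

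The main obstacle is the bookkeeping of the $\epsilon$-dependent time rescaling: since both $\phi_\epsilon$ and its period $P(\epsilon)$ move with $\epsilon$, the forcing term $\tfrac{P'(0)}{P}\phi_1 - E\phi_1$ in the equation for $\chi$ must be derived carefully. The critical observation is that the correction $\delta(\eta_{j-1} - \eta_n)$ built into $\tilde\xi_j(\delta)$ is precisely what is needed to absorb the extra $(\Phi_0(P) - I)\chi(0)$ piece produced by the perturbation of $\Phi(P,\delta)\phi_1(0)$, so that the eigenvalue is shifted purely by the factor $1 - \delta n P'(0)$ up to second order.
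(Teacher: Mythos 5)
Your proof is correct, and all the computations check out: the slot-wise action of $\mathcal{E}$ as $nE$ on zero-sum vectors, the variation-of-parameters formula $\Phi_1(P)=-n\Phi_0(P)\int_0^P\Phi_0(s)^{-1}E\Phi_0(s)\,ds$, the rescaled periodic variational equation $\dot\chi=J\chi+\tfrac{P'(0)}{P}\phi_1-E\phi_1$ with $\chi(P)=\chi(0)$, and the resulting identity $\Phi_0(P)\int_0^P\Phi_0(s)^{-1}E\phi_1(s)\,ds=P'(0)\phi_1(0)+(\Phi_0(P)-I)\chi(0)$, whose substitution indeed cancels the unwanted terms and yields \eqref{Expand-2}. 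But your route is genuinely different from the paper's. The paper never reduces to an $m$-dimensional system: it stays in $\mathbb{R}^{mn}$, defines $\Psi_1(t,\delta)$ as the solution of the initial value problem with data $\tilde{\xi}_{2}(\delta)$, differentiates that problem in $\delta$ at $\delta=0$, and identifies the first variation in closed form as the stacked vector with entries $\pm\frac{\partial}{\partial\epsilon}\phi(t,0)$ by matching against the \emph{unrescaled} variational equation \eqref{eq-imp-2-1}, $\dot\chi=J\chi-E\phi_1$, whose solution is not periodic; $P'(0)$ then enters through the single endpoint identity $\frac{\partial}{\partial\epsilon}\phi(P,0)=\frac{\partial}{\partial\epsilon}\phi(0,0)-P'(0)\phi_1(0)$, obtained by differentiating $\phi(t+P(\epsilon),\epsilon)=\phi(t,\epsilon)$. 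Your integral identity is exactly the variation-of-parameters form of that endpoint identity, so the two bookkeeping devices (periodic rescaled $\chi$ with a $\tfrac{P'(0)}{P}\phi_1$ forcing, versus unrescaled $\chi$ drifting by $-P'(0)\phi_1(0)$ per period) are interchangeable. What your approach buys: the synchronous/zero-sum decomposition (for full rigor, note $\Sigma:=\sum_j Y^j$ satisfies $\dot\Sigma=J\Sigma$, so $\Sigma(0)=0$ propagates and the slot-wise decoupling is legitimate) makes transparent why the problem is $m$-dimensional, why the factor $n$ appears, and handles all $j=2,\dots,n$ uniformly, where the paper settles $j=2$ and says ``similarly.'' What the paper's route buys: it avoids computing $\Phi_1(P)$ and inverting fundamental matrices, because the correction $\delta(\eta_{j-1}-\eta_n)$ built into $\tilde{\xi}_j(\delta)$ is by construction the $\delta$-derivative of the initial data, so the first variation is written down directly — your closing observation that this correction is precisely what absorbs $(\Phi_0(P)-I)\chi(0)$ is the same mechanism, made explicit.
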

\begin{proof}
Note that
$\widetilde{\mathcal{U}}(t):=
\left(\left(\phi_{1}(t)\right)^{T},\left(\phi_{1}(t)\right)^{T},... ,\left(\phi_{1}(t)\right)^{T}\right)^{T}$
satisfies \eqref{eq-patch-var}.
Then the monodromy operator  $\Psi(P,\delta)$ has the Floquet multiplier $\gamma_{1}(\delta)\equiv1$ for each $\delta\geq 0$
and satisfies \eqref{Expand-1}.

In order to prove \eqref{Expand-2},
we first consider the case $j=2$.
Let $\Psi_{1}(t,\delta)$ be the unique solution of the initial value problem:
\begin{eqnarray}\label{IVP-1}
\begin{aligned}
\frac{d}{dt}\Psi_{1}(t,\delta)=&\ (-\delta \mathcal{E} +\widehat{J}(t))\Psi_{1}(t,\delta),\\
\Psi_{1}(0,\delta)=&\  \tilde{\xi}_{2}(\delta).
\end{aligned}
\end{eqnarray}
Differentiating with respect to $\delta$ and then setting $\delta=0$,
we have
\begin{eqnarray} \label{eq-imp-1-1}
\begin{aligned}
\frac{\partial}{\partial t}\frac{\partial}{\partial \delta}\Psi_{1}(t,0)
=&\ -\mathcal{E}\Psi_{1}(t,0)+\widehat{J}(t)\frac{\partial}{\partial \delta}\Psi_{1}(t,0),\\
\frac{\partial}{\partial \delta}\Psi_{1}(0,0)
=&\  \eta_{1}-\eta_{n}.
\end{aligned}
\end{eqnarray}
Note that $\Psi_{1}(t,0)$ satisfies
\begin{eqnarray*}
\frac{d}{dt}\Psi_{1}(t,0)= \widehat{J}(t)\Psi_{1}(t,0),
\ \ \ \ \ \Psi_{1}(0,0)=\frac{1}{n}(\xi_{1}-\xi_{n}).
\end{eqnarray*}
Then we can compute
\[
\Psi_{1}(t,0)
=
\left(
\begin{array}{c}
\frac{1}{n}\phi_{1}(t)\\
0\\
\vdots \\
0\\
-\frac{1}{n}\phi_{1}(t)
\end{array}
\right),\ \ \ \ \
\mathcal{E}\Psi_{1}(t,0)
=
\left(
\begin{array}{c}
E\phi_{1}(t)\\
0\\
\vdots \\
0\\
-E\phi_{1}(t)
\end{array}
\right),
\]
where $0$ is the zero vector in $\mathbb{R}^{m}$.

Recall that $\phi(t,\epsilon)$ is a periodic solution bifurcating
from $\phi(t)$ in the perturbed system \eqref{pert-ODE}.
Then substituting  $U(t)=\phi(t,\epsilon)$ into \eqref{pert-ODE}
and  differentiating system \eqref{pert-ODE} with respect to $\epsilon$,
we have
\begin{eqnarray}\label{eq-imp-2-1}
\frac{\partial }{\partial t}\left(\frac{\partial }{\partial \epsilon}\phi(t,0)\right)
=-E\phi_{1}(t)+J(t)\left(\frac{\partial }{\partial \epsilon}\phi(t,0)\right).
\end{eqnarray}
By the above equation, we can check  that the initial value problem \eqref{eq-imp-1-1}
has the unique solution
\begin{eqnarray*}
\frac{\partial}{\partial \delta}\Psi_{1}(t,0)
=
\left(
\begin{array}{c}
\frac{\partial }{\partial\epsilon}\phi(t,0)\\
0\\
\vdots \\
0\\
-\frac{\partial }{\partial\epsilon}\phi(t,0)
\end{array}
\right).
\end{eqnarray*}
Note that  $\Psi_{1}(P,\delta)$ is analytic in the parameter $\delta$.
We have the following  expansion:
\begin{eqnarray*}
\Psi_{1}(P,\delta)
   =\Psi_{1}(P,0)+\delta \frac{\partial}{\partial \delta}\Psi_{1}(P,0)+O(\delta^{2}).
\end{eqnarray*}
This together with \eqref{IVP-1} and the fact that
$$\Psi(P,0)\frac{\xi_{1}-\xi_{n}}{n}=\frac{\xi_{1}-\xi_{n}}{n}$$
yields
\begin{eqnarray}\label{Psi-expand}
\Psi(P,\delta)\tilde{\xi}_{2}(\delta)=\frac{1}{n}(\xi_{1}-\xi_{n})
+\delta\left(
\begin{array}{c}
\frac{\partial }{\partial\epsilon}\phi(P,0)\\
0\\
\vdots \\
0\\
-\frac{\partial }{\partial\epsilon}\phi(P,0)
\end{array}
\right)+O(\delta^{2}).
\end{eqnarray}
Since $\phi(t+P(\epsilon),\epsilon)=\phi(t,\epsilon)$,
differentiating with respect to $\epsilon$, we have
\begin{eqnarray*}
\frac{\partial }{\partial \epsilon}\phi(P,0)=-P'(0)\phi_{1}(0)+\frac{\partial }{\partial \epsilon}\phi(0,0).
\end{eqnarray*}
 Substituting this into \eqref{Psi-expand} yields
\begin{eqnarray*}
\Psi(P,\delta)\tilde{\xi}_{2}(\delta)
=(1-\delta nP'(0))\tilde{\xi}_{2}(\delta)+O(\delta^{2}).
\end{eqnarray*}
Similarly, we can prove that \eqref{Expand-2} holds for $j=3,...,n$.
This finishes the proof.
\end{proof}

In order to study the stability of $\mathcal{U}_{0}(t)$ with respect to system \eqref{eq-patch},
we give the following lemma on the Floquet multipliers of \eqref{pert-ODE}.
\begin{lemma}\label{lm-PF-patch}
For sufficiently small $|\epsilon|\geq 0$,
let $P(\epsilon)$ be the period function of
periodic solutions $\phi(t,\epsilon)$ bifurcating from $\phi(t)$ in the perturbed system \eqref{pert-ODE}.
Then for sufficiently small $\delta>0$,
system \eqref{pert-ODE} has $n$ Floquet multipliers $\gamma_{j}(\delta)$ perturbed from one,
and $\gamma_{j}(\delta)$ are in the form
\begin{eqnarray*}
\gamma_{1}(\delta)\equiv 1, \ \ \ \
\gamma_{j}(\delta)=1-nP'(0)\delta+O(\delta^{2}), \ \ \ j=2,...,n.
\end{eqnarray*}
\end{lemma}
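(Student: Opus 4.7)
The plan is to extract the asymptotic behavior of the $n$ Floquet multipliers bifurcating from $1$ directly from the (approximate) eigenvector identities established in Lemma \ref{lm-expansion}, by invoking the perturbation-theoretic lemmas for matrices from Appendix A. The monodromy operator $\Psi(P,\delta)$ is analytic in $\delta$, so its spectrum depends continuously on $\delta$; at $\delta=0$, assumption (H) gives that $\Psi(P,0)$ has the eigenvalue $1$ with geometric (and algebraic) multiplicity exactly $n$, and its remaining $m(n-1)$ eigenvalues lie strictly inside the unit disk. Hence for sufficiently small $\delta\geq 0$, the Floquet multipliers split into a group of $n$ multipliers $\gamma_{1}(\delta),\dots,\gamma_{n}(\delta)$ clustered near $1$ and a complementary group that stays strictly inside the unit disk.

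First I would verify that $\tilde{\xi}_{1},\tilde{\xi}_{2}(0),\dots,\tilde{\xi}_{n}(0)$ form a basis of $\ker(\Psi(P,0)-I_{mn})$. Since $\tilde{\xi}_{1}=\xi_{1}+\cdots+\xi_{n}$ and $\tilde{\xi}_{j}(0)=\frac{1}{n}(\xi_{j-1}-\xi_{n})$, a short linear-algebra check produces an invertible change of basis from $\{\xi_{1},\dots,\xi_{n}\}$ to $\{\tilde{\xi}_{1},\tilde{\xi}_{2}(0),\dots,\tilde{\xi}_{n}(0)\}$, so the latter is indeed a basis of the kernel. By the analytic dependence of $\Psi_{1}(t,\delta)$ on $\delta$ (via ODE theory for \eqref{IVP-1}), the perturbed family $\{\tilde{\xi}_{1},\tilde{\xi}_{2}(\delta),\dots,\tilde{\xi}_{n}(\delta)\}$ remains linearly independent for all sufficiently small $\delta\geq 0$, and spans a subspace that is $O(\delta)$-close to the unperturbed kernel.

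Next I would apply the perturbation lemmas from Appendix A to the $n$-dimensional approximately invariant subspace spanned by these vectors. The output of Lemma \ref{lm-expansion} is precisely the input those lemmas require: the identity \eqref{Expand-1} gives an \emph{exact} eigenvector $\tilde{\xi}_{1}$ with eigenvalue $1$, forcing $\gamma_{1}(\delta)\equiv 1$; the identities \eqref{Expand-2} give $n-1$ approximate eigenvectors $\tilde{\xi}_{j}(\delta)$ with common approximate eigenvalue $1-n P'(0)\delta$ modulo $O(\delta^{2})$. The perturbation lemmas then guarantee that the corresponding exact Floquet multipliers of $\Psi(P,\delta)$ agree with these approximate eigenvalues up to the same order, which yields $\gamma_{j}(\delta)=1-nP'(0)\delta+O(\delta^{2})$ for $j=2,\dots,n$.

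The main obstacle I anticipate is the degeneracy that all $n-1$ approximate eigenvalues coincide at leading order in $\delta$. This is not a regular perturbation of $n$ distinct eigenvalues, so one cannot simply quote a Taylor expansion for each multiplier in isolation; instead, one must apply the Appendix A lemmas to the whole $(n-1)$-dimensional anti-synchronous subspace at once and argue that the restriction of $\Psi(P,\delta)$ to this approximately invariant block is a scalar $1-nP'(0)\delta$ plus $O(\delta^{2})$. Once this block-level statement is extracted, concluding the common asymptotic form of $\gamma_{2}(\delta),\dots,\gamma_{n}(\delta)$ is immediate.
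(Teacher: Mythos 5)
Your proposal is correct and takes essentially the same route as the paper's proof: it assembles the exact relation \eqref{Expand-1} and the approximate eigenvector relations \eqref{Expand-2} into a block statement $\Psi(P,\delta)W_{+}(\delta)=W_{+}(\delta)\Lambda_{+}(\delta)+O(\delta^{2})$ and then invokes the Appendix A lemmas, with Lemma \ref{lm-app-0} supplying the complementary invariant family $W_{-}(\delta)$ (whose spectrum stays bounded away from the cluster at $1$) and Lemma \ref{lm-app} yielding the first-order expansions of the clustered multipliers. Your closing remark about treating the degenerate $(n-1)$-dimensional anti-synchronous block as a whole, rather than expanding each multiplier separately, is precisely the point the paper's application of Lemma \ref{lm-app} addresses.
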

\begin{proof}
Note that the monodromy operator $\Psi(p,\delta)$ is analytic in $\delta$.
Then for sufficiently small $\delta>0$,
the monodromy operator $\Psi(p,\delta)$ has  $n$ eigenvalues $\gamma_{j}(\delta)$ ($j=1,2,...,n$)
arising from one. This finishes the proof for the first statement.

By the proof of Lemma \ref{lm-expansion}, we have $\gamma_{1}(\delta)\equiv 1$ for $\delta\geq 0$.
To prove the expressions for $\gamma_{j}(\delta)=1-nP'(0)\delta+O(\delta^{2})$, $j=2,...,n$,
we define $W_{+}(\delta)$ and $\Lambda_{+}(\delta)$ by
$W_{+}(\delta)= \left(\tilde{\xi}_{1},\tilde{\xi}_{2}(\delta),...,\tilde{\xi}_{n}(\delta)\right)$ and
\begin{eqnarray*}
\Lambda_{+}(\delta)=\left(
\begin{array}{cccc}
1 & 0 & \cdot\cdot\cdot & 0\\
0 & 1-\delta nP'(0) & \cdot\cdot\cdot & 0\\
\cdot\cdot\cdot & \cdot\cdot\cdot & \cdot\cdot\cdot & \cdot\cdot\cdot\\
0 & 0 & \cdot\cdot\cdot & 1-\delta nP'(0)
\end{array}
\right),
\end{eqnarray*}
where  $\tilde{\xi}_{1}$, $\tilde{\xi}_{2}(\delta)$,...,$\tilde{\xi}_{n}(\delta)$ are defined in Lemma \ref{lm-expansion}.
Then by \eqref{Expand-1} and \eqref{Expand-2},
\begin{eqnarray}\label{df-W+}
\Psi(P,\delta)W_{+}(\delta)=W_{+}(\delta)\Lambda_{+}(\delta)+O(\delta^{2})
\end{eqnarray}
for sufficiently small $\delta\geq 0$.

By \eqref{cond-stab}, Lemma \ref{lm-app-0}  and the definition of $W_{+}(\delta)$,
there exists a small $\delta_{0}>$ and continuous functions
$\tilde{\xi}_{j}(\delta)\in \mathbb{R}^{mn}$  for $j=n+1,...,mn$ and $|\delta|<\delta_{0}$
such that
$$W_{-}(\delta):=\left(\tilde{\xi}_{n+1}(\delta),...,\tilde{\xi}_{mn}(\delta)\right)$$
satisfies
\begin{eqnarray}\label{df-W-}
\Psi(P,\delta)W_{-}(\delta)=W_{-}(\delta)\Lambda_{-}(\delta),\ \ \
|{\rm det}(W_{+}(\delta),W_{-}(\delta))|>0, \ \ \ |\delta|<\delta_{0},
\end{eqnarray}
where $\Lambda_{-}(\delta)\in\mathbb{R}^{(m-1)n\times (m-1)n}$ is non-singular and continuous in $\delta$,
and all eigenvalue of  $\Lambda_{-}(\delta)$ is  bounded away from $\gamma_{j}(\delta)$ in the complex plane for $|\delta|<\delta_{0}$.
By \eqref{df-W+} and \eqref{df-W-}, we get
\begin{eqnarray*}
\Psi(P,\delta)(W_{+}(\delta),W_{-}(\delta))
=(W_{+}(\delta),W_{-}(\delta))
\left\{\left(
\begin{array}{cc}
\Lambda_{+}(\delta) & 0\\
0 & \Lambda_{-}(\delta)
\end{array}
\right)+W_{0}(\delta)\right\},
\end{eqnarray*}
where $W_{0}(\delta)$ is continuous in $\delta$ and $W_{0}(\delta)=O(|\delta|^{2})$ for sufficiently small $|\delta|$.
Therefore, the proof is finished by Lemma \ref{lm-app}.
\end{proof}

Now we state the main result in the following.
\begin{theorem}\label{lm-unstab-patch}
Suppose that $\phi$ is a periodic solution with  the minimum period $P>0$ for the kinetic system \eqref{ODE-alpha}
 that satisfies assumption {\bf (H)}.
Let $P(\epsilon)$ denote the period function of bifurcating  periodic solutions
arising from $\phi$ in the perturbed system \eqref{pert-ODE}.
Then for sufficiently small $\delta>0$,
the synchronous periodic solution $\mathcal{U}_{0}(t)=(\phi(t)^{T},...,\phi(t)^{T})^{T}$ is unstable with respect to patch model \eqref{gpath}
if $P'(0)<0$.
\end{theorem}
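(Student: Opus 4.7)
The plan is to deduce the theorem as a short corollary of Lemma \ref{lm-PF-patch} together with the standard Floquet-theoretic criterion for instability of periodic orbits. The key observation will be that the sign condition $P'(0)<0$ forces the leading coefficient $-nP'(0)$ appearing in the Floquet expansion to be strictly positive.

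First, I will invoke Lemma \ref{lm-PF-patch} to obtain, for sufficiently small $\delta>0$, the $n$ Floquet multipliers of the linearized periodic system \eqref{eq-patch-var} that bifurcate from $1$ at $\delta=0$, namely
\[
\gamma_{1}(\delta)\equiv 1,\qquad \gamma_{j}(\delta)=1-nP'(0)\delta+O(\delta^{2}),\quad j=2,\ldots,n.
\]
Under the hypothesis $P'(0)<0$ one has $-nP'(0)>0$, so by choosing $\delta>0$ small enough the $O(\delta^{2})$ remainder is dominated by the linear term; consequently each $\gamma_{j}(\delta)$ with $j\geq 2$ is real-valued and satisfies $\gamma_{j}(\delta)>1$, and in particular $|\gamma_{j}(\delta)|>1$.

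Next I will argue that the remaining $(m-1)n$ Floquet multipliers stay inside the open unit disk for small $\delta>0$. At $\delta=0$ the system \eqref{eq-patch-var} decouples into $n$ independent copies of the linearization \eqref{ODE-var}. Assumption {\bf (H)} then implies that $\Psi(P,0)$ has the eigenvalue $1$ with multiplicity $n$ together with $(m-1)n$ further eigenvalues strictly inside the open unit disk. Because $\Psi(P,\delta)$ depends analytically on $\delta$ (the proof of Lemma \ref{lm-PF-patch} already fits it into the framework covered by Lemmas \ref{lm-app-0}--\ref{lm-app}), continuity of the spectrum guarantees that these $(m-1)n$ multipliers remain strictly inside the open unit disk for all sufficiently small $\delta\geq 0$.

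Combining the two steps, for all sufficiently small $\delta>0$ the monodromy operator $\Psi(P,\delta)$ admits at least one Floquet multiplier of modulus strictly greater than one, and the classical principle of linearized stability for periodic orbits (see \cite{Hale-80}) then yields instability of $\mathcal{U}_{0}$ with respect to the nonlinear patch model \eqref{gpath}. The only subtle point I foresee is making sure that the smallness of $\delta$ required for the two ingredients can be taken uniformly; but since both are already provided on a common neighbourhood of $\delta=0$ by Lemma \ref{lm-PF-patch}, no genuinely new perturbation analysis is needed, and the theorem reduces essentially to a sign-reading of the leading coefficient $-nP'(0)$.
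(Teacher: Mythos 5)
Your proposal is correct and takes essentially the same route as the paper: invoke Lemma \ref{lm-PF-patch}, observe that $P'(0)<0$ makes the leading coefficient $-nP'(0)$ positive so that $\gamma_{j}(\delta)>1$ for $j=2,\ldots,n$ and small $\delta>0$, and conclude instability by linearized Floquet theory. Your intermediate step verifying that the remaining $(m-1)n$ multipliers stay inside the unit disk is superfluous for instability (a single multiplier of modulus greater than one suffices, which is all the paper uses), though it is harmless and would be the relevant ingredient for a stability statement such as Proposition \ref{thm-TStab-patch}.
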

\begin{proof}
If $P'(0)<0$, then by Lemma \ref{lm-PF-patch},
the Floquet multipliers $\gamma_{j}(\delta)$ $(j=2,...,n)$ satisfy
\begin{eqnarray*}
\gamma'_{j}(0)=-nP'(0)>0.
\end{eqnarray*}
Recall that $\gamma_{j}(0)=1$ for $j=2,...,n$.
Then for sufficiently small $\delta>0$,
the linearized system \eqref{eq-patch-var} has at least $(n-1)$ Floquet multipliers
that have modulii greater than one.
This proves that $\mathcal{U}_{0}(t)$ is unstable. Thus, the proof is now complete.
\end{proof}

\begin{remark}
More recently, Yi \cite{Yi-21} considered patch model \eqref{gpath}
and studied the destabilization  of the synchronous periodic solution $\mathcal{U}_{0}$
 using the perturbed patch model \eqref{eq:pert-patch}. In order to prove the instability of $\mathcal{U}_{0}$,
\cite{Yi-21} required that the perturbed patch model \eqref{eq:pert-patch}
has a periodic solution
$((U_{p}^{1}(\cdot,\epsilon))^{T},(U_{p}^{2}(\cdot,\epsilon))^{T},...,(U_{p}^{n}(\cdot,\epsilon))^{T})^{T}$
 that asymptotically vanishes in one patch and persists in the other $(n-1)$ patches, i.e.,
\begin{eqnarray*}
((U_{p}^{1}(t,\epsilon))^{T},(U_{p}^{2}(t,\epsilon))^{T},...,(U_{p}^{n}(t,\epsilon))^{T})^{T}
\to ((\phi(t))^{T},(\phi(t))^{T},\cdot\cdot\cdot,(\phi(t))^{T},0)^{T},
 \ \ \ \mbox{ as }\ \epsilon\to 0.
\end{eqnarray*}
Here without this condition, we rigorously prove the instability of $\mathcal{U}_{0}$ under the condition that $P'(0)<0$.
This improves the result in \cite{Yi-21}.

\end{remark}

\section{Application to $n$-patch models with two-dimensional kinetic system}
\label{sec-app}

As an application of Theorem \ref{lm-unstab-patch},
we consider  an $n$-patch model with two-dimensional kinetic system
\begin{equation} \label{2d-patch-model}
\begin{aligned}[cll]
&\frac{d u_{j}}{dt} =\delta\sum_{i\in\Omega}\left(d_{11}(u_{i}-u_{j})+d_{12} (v_{i}-v_{j})\right)+f(u_{j},v_{j},\alpha),\  \ \ \  \
   &&\ \ j\in \Omega:=\{1,2,...,n\},\\
&\frac{d v_{j}}{dt} =\delta \sum_{i\in\Omega}(d_{21}(u_{i}-u_{j})+d_{22} (v_{i}-v_{j}))+g(u_{j},v_{j},\alpha),\  \ \ \  \
  &&\ \  j\in \Omega:=\{1,2,...,n\},
\end{aligned}
\end{equation}
where $(u_{j}(t),v_{j}(t))^{T}\in\mathbb{R}^{2}$ represent the population densities of two species in the $j$-th patch,
$n$ is an integer greater or equal to $2$, and $\delta>0$ indicates the coupling strength.
Here the parameters  $d_{ij}$  in \eqref{2d-patch-model} indicate the diffusion coefficients.
In particular, when  $i\neq j$, the parameters $d_{ij}$ are the cross-diffusion rates.

The underlying kinetic system of the above patch model reads as the following system of ordinary differential
equations
\begin{equation}
\label{2d-ODE}
\begin{aligned}[cl]
&\dot u=f(u,v,\alpha),\\
&\dot v =g(u,v,\alpha),
\end{aligned}
\end{equation}
where $\alpha$ in $\mathbb{R}$ is a system parameter, and the functions $f$ and $g$ are sufficiently smooth.
If the kinetic system \eqref{2d-ODE} has a stable periodic solution $\phi(t)\in\mathbb{R}^{2}$,
then patch model \eqref{2d-patch-model} has a synchronous periodic solution $((\phi(t))^{T},...,(\phi(t))^{T})^{T}\in\mathbb{R}^{2n}$
 that are also stable in the absence of diffusion.
Our aim is to discuss whether this stable synchronous periodic solution  could become unstable  in the presence  of diffusion.
Specially, we focus on  periodic solutions arising from closed orbits and degenerate Hopf bifurcation,
which are called large- and small-amplitude bifurcating periodic solutions, respectively.

\subsection{Application to large-amplitude bifurcating periodic solutions}
\label{sec:4.1}

Consider the kinetic system with $\alpha=\alpha_{0}$ which is in the form
\begin{equation} \label{2d-ODE-2}
\begin{aligned}[cl]
&\dot u=f(u,v,\alpha_{0})=:f(u,v),\\
&\dot v=g(u,v,\alpha_{0})=:g(v,v).
\end{aligned}
\end{equation}
Let $\psi(t)=(u_{0}(t),v_{0}(t))^{T}\in\mathbb{R}^{2}$ be a periodic solution of \eqref{2d-ODE-2}
that satisfies the following hypothesis:
\begin{itemize}
  \item The periodic solution $\psi(t)$ is stable and has minimum period $p>0$.
  The related Floquet multipliers $\gamma$ and $\tilde{\gamma}$ satisfy
  \begin{eqnarray}\label{cond-FM}
\gamma=1,\ \ \ 0<\tilde{\gamma}<1.
\end{eqnarray}
\end{itemize}
In the view of bifurcation theory,
this periodic solution $\psi(t)$ can
bifurcate from a perturbation of a closed orbit in the  kinetic system  \eqref{2d-ODE} with $\alpha$ near $\alpha_{0}$.

It is clear that the linearization of system \eqref{2d-ODE-2} about $\psi(t)$ is in the form
\begin{eqnarray*}
\frac{d}{dt}
\left(
\begin{array}{c}
\tilde{u}(t)\\
\tilde{v}(t)
\end{array}\right)=
\left(
\begin{array}{cc}
f_{u}(\psi(t)) & f_{v}(\psi(t)) \\
g_{u}(\psi(t)) & g_{v}(\psi(t))
\end{array}
\right)
\left(
\begin{array}{c}
\tilde{u}(t)\\
\tilde{v}(t)
\end{array}\right).
\end{eqnarray*}
Then by Lemma 7.3 in \cite[p.120]{Hale-80}, we have
\begin{eqnarray*}
\tilde{\gamma}=\gamma\tilde{\gamma}=\exp\left(\int_{0}^{p}\left(f_{u}(\psi(t))+g_{v}(\psi(t))\right)dt\right)<1.
\end{eqnarray*}
Following the discussion in Section \ref{sec-mainresult},
we consider an auxiliary planar system of the form
\begin{eqnarray}\label{pert-2D-ODE-1}
(I_{2}+\epsilon D)
\left(
\begin{array}{c}
\dot u\\
\dot v
\end{array}
\right)=
\left(
\begin{array}{c}
f(u,v,\alpha) \\
g(u,v,\alpha)
\end{array}
\right),
\end{eqnarray}
where $I_{2}$ is the $2\times 2$ identity matrix,
and the matrix $D$  is in the form
\begin{eqnarray*}
D=
\left(
\begin{array}{cc}
d_{11} & d_{12}\\
d_{21} & d_{22}
\end{array}
\right).
\end{eqnarray*}
Consider the perturbation  system \eqref{pert-2D-ODE-1} with $\alpha=\alpha_{0}$.
It has the expansion with respect to $\epsilon$ as follows:
\begin{eqnarray}\label{pert-expand}
\ \ \
\begin{aligned}
\dot u&=f(u,v)-\epsilon(d_{11}f(u,v)+d_{12}g(u,v))+O(\epsilon^{2})=:f(u,v)+\epsilon f_{1}(u,v)+O(\epsilon^{2}),\\
\dot v&=g(u,v)-\epsilon(d_{21}f(u,v)+d_{22}g(u,v))+O(\epsilon^{2})=:g(u,v)+\epsilon g_{1}(u,v)+O(\epsilon^{2}).
\end{aligned}
\end{eqnarray}
We summarize some results on periodic solutions bifurcating from $\psi(t)$ in system \eqref{pert-expand}.
\begin{lemma}\label{lm-poincare-bif}
Suppose that the kinetic system \eqref{2d-ODE-2}
has a stable periodic solution satisfying the conditions in \eqref{cond-FM}.
Then there exists a small constant $\epsilon_{0}>0$
such that for each $\epsilon$ with $0\leq |\epsilon|<\epsilon_{0}$,
system \eqref{pert-expand} has exactly one limit cycle
$\psi(t,\epsilon):=(u_{0}(t,\epsilon),v_{0}(t,\epsilon))\in\mathbb{R}^{2}$ with period $P(\epsilon)$
bifurcating from $\psi(t)$.
Moreover, the period function $P(\epsilon)$ has the expansion of the form
\begin{eqnarray}\label{Perd-expansion}
\ \ \ \ \ \ \
\begin{aligned}
P(\epsilon) =&\  p+\epsilon\int_{0}^{p}
   \left(\left(\frac{2fg(f_{u}-g_{v})+(g^{2}-f^{2})(f_{u}+g_{v})}{(f^{2}+g^{2})^{3/2}}|_{(u,v)^{T}=\psi(t)}\right) \right.\\
   &\  \times \frac{1}{\sqrt{f^{2}(\psi(t))+g^{2}(\psi(t))}} \left(I(t)+\frac{1}{1-\tilde{\gamma}}I(p)e^{h(t)} \right)\\
  & \left.-\left(\frac{ff_{1}+gg_{1}}{f^{2}+g^{2}}|_{(u,v)^{T}=\psi(t)}\right)\right)dt+O(\epsilon^{2})\\
  =:&\ p+ P_{1}\epsilon+O(\epsilon^{2}),\ \ \ \ \ 0\leq |\epsilon|<\epsilon_{0},
\end{aligned}
\end{eqnarray}
where $f_{1}$ and $g_{1}$ are defined as in \eqref{pert-expand}, and  $I(t)$ and $h(t)$ are in the form
\begin{eqnarray}
I(t) \!\!\!&=&\!\!\! e^{h(t)}\int_{0}^{t}\left(e^{-h(s)}(fg_{1}-gf_{1})|_{(u,v)^{T}=\psi(s)}\right)ds,\label{df-I-t}\\
h(t) \!\!\!&=&\!\!\! \int_{0}^{t}\left(f_{u}(\psi(s))+g_{v}(\psi(s))\right)ds.\nonumber
\end{eqnarray}
\end{lemma}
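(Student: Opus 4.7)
The plan is to split the argument into two pieces: existence of the bifurcating family $\psi(\cdot,\epsilon)$, and the first-order expansion of $P(\epsilon)$. For existence, I would apply the implicit function theorem to the Poincar\'e first-return map of \eqref{pert-expand}. Fix a transverse section $\Sigma$ at $\psi(0)$ parametrized by the signed normal displacement $\sigma$, and let $\Pi_\epsilon(\sigma)$ denote the first-return map. Condition \eqref{cond-FM} yields $\partial_\sigma \Pi_0(0) = \tilde\gamma$, so $\partial_\sigma(\Pi_\epsilon(\sigma) - \sigma)|_{\sigma=0,\epsilon=0} = \tilde\gamma - 1 \neq 0$, and the displacement function admits a unique smooth zero $\sigma^\ast(\epsilon)$ for $|\epsilon| < \epsilon_0$, producing the claimed limit cycle $\psi(\cdot,\epsilon) = (u_0(\cdot,\epsilon),v_0(\cdot,\epsilon))^T$.

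For the period expansion, I would pass to curvilinear coordinates $(s,r)$ in a neighborhood of $\psi$ via $x = \psi(s) + r\mathbf{n}(s)$, with unit normal $\mathbf{n}(s) = (-g,f)^T|_{\psi(s)}/\sqrt{f^2+g^2}|_{\psi(s)}$, and rewrite \eqref{pert-expand} as scalar ODEs for $(\dot s,\dot r)$ to first order in $(r,\epsilon)$. Projecting the velocity equation onto $\mathbf{n}$ produces a linear ODE for $r$; the rescaling $\rho = r\sqrt{f^2+g^2}|_{\psi(s)}$, chosen precisely so that the linear coefficient becomes $(f_u + g_v)|_{\psi(s)} = h'(s)$ and the inhomogeneity becomes $(fg_1 - gf_1)|_{\psi(s)}$, reduces the normal dynamics to $\dot\rho = h'(s)\rho + \epsilon(fg_1 - gf_1)|_{\psi(s)} + O(\epsilon^2)$. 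Imposing $\rho(p) = \rho(0)$ and using $\tilde\gamma = e^{h(p)}$ yields $\rho_1(s) = I(s) + I(p)e^{h(s)}/(1-\tilde\gamma)$, so that the normal displacement of the bifurcating cycle is $r(s,\epsilon) = \epsilon\rho_1(s)/\sqrt{f^2+g^2}|_{\psi(s)} + O(\epsilon^2)$.

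Given the bifurcating cycle, the period is $P(\epsilon) = \int_0^p ds/\dot s$ evaluated along it. Projecting the velocity equation onto the tangent direction and expanding to first order gives $\dot s = 1 + r\mathcal{A}(s) + \epsilon(ff_1+gg_1)/(f^2+g^2)|_{\psi(s)} + O$, where $\mathcal{A}(s)$ is the geometric coefficient arising from the combination $V \cdot DV\mathbf{n} + \mathbf{n}\cdot DV V$ together with the rotation of the normal frame $\mathbf{n}'$. Substituting $r(s,\epsilon) = \epsilon\rho_1(s)/\sqrt{f^2+g^2}|_{\psi(s)}$ and integrating $1/\dot s$ over $[0,p]$ produces two contributions: the tangential perturbation yields the term $-\int_0^p (ff_1+gg_1)/(f^2+g^2)\,dt$, and the normal displacement yields the term involving $\rho_1(t)$ multiplied by the coefficient claimed in the statement, thereby establishing the announced expansion of $P(\epsilon)$.

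The main obstacle is the precise derivation of the geometric weight $\mathcal{A}(s)$ and its simplification, combined with the $1/\sqrt{f^2+g^2}$ factor coming from the rescaling $r = \rho/\sqrt{f^2+g^2}$, into the closed form $[2fg(f_u-g_v) + (g^2-f^2)(f_u+g_v)]/(f^2+g^2)^2$ quoted in the lemma. This requires careful bookkeeping when expanding $V(\psi(s) + r\mathbf{n}(s))$ to first order in $r$, projecting onto the moving frame, and combining the direct contribution of $DV\mathbf{n}$ with the rotation $\mathbf{n}'$ of the local frame; the remaining ingredients - existence via the implicit function theorem, the scalar linear ODE for $\rho$, and the assembly of $P_1$ as the sum of the tangential and normal contributions - are essentially routine once this coefficient has been secured.
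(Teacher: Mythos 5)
Your strategy is sound, but it is genuinely more self-contained than the paper's proof, which consists almost entirely of two citations: existence of the perturbed limit cycle is quoted from Coddington--Levinson (Theorem 2.1, p.~352 of \cite{Cod-Lev-55}), and the expansion \eqref{Perd-expansion} is read off from Urabe's formulas (2.16), (2.18) and (4.5) in \cite{Urabe-54}, the paper merely substituting Urabe's $\rho_{1}(t,\epsilon)$ and $c(\epsilon)$ into his period formula. Your existence argument via the Poincar\'e return map and the implicit function theorem, using $\partial_{\sigma}\Pi_{0}(0)=\tilde{\gamma}\neq 1$ from \eqref{cond-FM}, is a correct stand-in for the first citation. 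Your period computation re-derives Urabe's machinery from scratch, and the parts you actually carry out check against the paper: the rescaled normal variation $\rho=r\sqrt{f^{2}+g^{2}}$ does satisfy $\dot{\rho}=h'(s)\rho+\epsilon(fg_{1}-gf_{1})|_{\psi(s)}+O(\epsilon^{2})$ (the inhomogeneity being the projection of $(f_{1},g_{1})$ onto $(-g,f)$, and the rescaling killing the $\frac{d}{ds}\log|V|$ term so that only $\operatorname{div}V=h'$ survives), and periodicity with $\tilde{\gamma}=e^{h(p)}$ gives $\rho_{1}(t)=I(t)+I(p)e^{h(t)}/(1-\tilde{\gamma})$, which is exactly the structure of Urabe's $\rho_{1}(t,\epsilon)$ and $c(\epsilon)$ in the paper's proof; the paper's $\rho_{1}$ is your $r=\rho/\sqrt{f^{2}+g^{2}}$, which is the origin of the extra factor $1/\sqrt{f^{2}+g^{2}}$ in \eqref{Perd-expansion}. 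The tangential contribution $-\epsilon\int_{0}^{p}(ff_{1}+gg_{1})/(f^{2}+g^{2})\,dt$ also comes out correctly from inverting $\dot{s}$.

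The one substantive incompleteness is the piece you explicitly defer as an ``obstacle'': the derivation and simplification of the geometric weight $\mathcal{A}(s)$ into the closed form $\bigl(2fg(f_{u}-g_{v})+(g^{2}-f^{2})(f_{u}+g_{v})\bigr)/(f^{2}+g^{2})^{3/2}$, including its sign (which must emerge opposite to your $\mathcal{A}$ after expanding $1/\dot{s}$). That weight is the entire analytic content of Urabe's formula (2.16), i.e.\ precisely what the paper's citation buys it. So as written, your argument establishes the form $P(\epsilon)=p+P_{1}\epsilon+O(\epsilon^{2})$ with the correct tangential term and the correct normal profile $I(t)+I(p)e^{h(t)}/(1-\tilde{\gamma})$, but not the stated closed form of the coefficient multiplying it; to finish the lemma you must either carry out the moving-frame bookkeeping for $\mathcal{A}(s)$ (a standard Diliberto-type computation that does close to the quoted expression) or simply cite Urabe as the paper does.
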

\begin{proof}
We can prove the existence of perturbed periodic solutions  using \cite[Theorem 2.1, p.352]{Cod-Lev-55}.
Now we give the expansion of the period function $P(\epsilon)$.
For sufficiently small $|\epsilon|$, by the formulas (2.16), (2.18) and (4.5) in \cite{Urabe-54},
we have
\begin{eqnarray*}
P(\epsilon)\!\!\!&=&\!\!\!p+\int_{0}^{p}
   \left(\left(\frac{2fg(f_{u}-g_{v})+(g^{2}-f^{2})(f_{u}+g_{v})}{(f^{2}+g^{2})^{3/2}}|_{(u,v)^{T}=\psi(t)}\right)
 \rho_{1}(t,\epsilon)\right.\\
   \!\!\!& &\!\!\! \left.-\epsilon\left(\frac{ff_{1}+gg_{1}}{f^{2}+g^{2}}|_{(u,v)^{T}=\psi(t)}\right)\right)dt+O(\epsilon^{2}),
\end{eqnarray*}
where $\rho_{1}(t,\epsilon)$ is in the form
\begin{eqnarray*}
\rho_{1}(t,\epsilon)\!\!\!&=&\!\!\!
 \frac{\epsilon}{\sqrt{f^{2}(\psi(t))+g^{2}(\psi(t))}}I(t) +\frac{\sqrt{f^{2}(\psi(0))+g^{2}(\psi(0))}}{\sqrt{f^{2}(\psi(t))+g^{2}(\psi(t))}}e^{h(t)}c(\epsilon),\\
c(\epsilon) \!\!\!&=&\!\!\! \epsilon \frac{1}{(1-\tilde{\gamma})\sqrt{f^{2}(\psi(0))+g^{2}(\psi(0))}}I(p)+O(\epsilon^{2}).
\end{eqnarray*}
Thus, we can compute \eqref{Perd-expansion}. This finishes the proof.
\end{proof}

Now we state the results on destabilization of large-amplitude periodic solutions.
\begin{proposition}\label{thm-Poincare-Destab}
Suppose that the kinetic system \eqref{2d-ODE} with $\alpha=\alpha_{0}$ has a stable periodic solution
$\psi(t)=(u_{0}(t),v_{0}(t))\in\mathbb{R}^{2}$
that satisfies \eqref{cond-FM}.
If the constant $P_{1}$ defined in \eqref{Perd-expansion} satisfies $P_{1}<0$,
then for sufficiently small $\delta>0$,
the corresponding  synchronous periodic solution $(\psi(t),...,\psi(t))^{T}$
is unstable with respect to  patch model \eqref{2d-patch-model}.
\end{proposition}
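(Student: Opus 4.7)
The plan is to apply Theorem \ref{lm-unstab-patch} directly. Proposition \ref{thm-Poincare-Destab} is really a dictionary-translation: it rephrases the abstract destabilization criterion $P'(0)<0$ in the concrete two-dimensional Urabe-type setting, where $P'(0)$ is computed explicitly as the coefficient $P_{1}$ from Lemma \ref{lm-poincare-bif}. So rather than redoing the monodromy perturbation analysis, I would assemble the three ingredients already at hand: hypothesis matching, the period expansion, and the abstract criterion.

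First I would verify that assumption {\bf (H)} holds with $m=2$. Here the kinetic system is \eqref{2d-ODE-2}, which is \eqref{ODE-alpha} with $F=(f,g)^{T}$, and the stable periodic solution is $\psi$ with minimum period $p$. The Floquet multipliers listed in \eqref{cond-FM} are $\gamma=1$ and $0<\tilde{\gamma}<1$, which is exactly \eqref{cond-stab} for $m=2$. Next I would identify the perturbation \eqref{pert-2D-ODE-1} at $\alpha=\alpha_{0}$ as a special case of \eqref{pert-ODE} with $E=D$ and $m=2$, so the bifurcation setup of Section \ref{sec-mainresult} applies and a continuously differentiable period function $P(\epsilon)$ of the bifurcating family $\psi(\cdot,\epsilon)$ is well defined on a neighborhood of $\epsilon=0$.

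Second, I would read off $P'(0)$ from Lemma \ref{lm-poincare-bif}. The expansion \eqref{Perd-expansion} can be written as $P(\epsilon)=p+P_{1}\epsilon+O(\epsilon^{2})$, from which it is immediate that $P'(0)=P_{1}$. Under the standing hypothesis $P_{1}<0$, we obtain $P'(0)<0$. An application of Theorem \ref{lm-unstab-patch} then yields that, for all sufficiently small $\delta>0$, the synchronous periodic solution $((\psi(t))^{T},\ldots,(\psi(t))^{T})^{T}$ is unstable with respect to the patch model \eqref{2d-patch-model}, completing the proof.

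There is essentially no obstacle of substance here, since the analytic heart of the argument — the Floquet spectrum asymptotics via Lemma \ref{lm-PF-patch} and the explicit first-order period computation of Urabe \cite{Urabe-54} recorded in Lemma \ref{lm-poincare-bif} — has been dispatched earlier. The only point requiring care is notational bookkeeping: checking that the matrix $D$ in \eqref{pert-2D-ODE-1} plays the role of $E$ in \eqref{pert-ODE}, and that the two-dimensional Floquet indices $(\gamma,\tilde{\gamma})$ line up with $(\gamma_{1},\gamma_{2})$ in \eqref{cond-stab}. One could also remark in the proof that the hypothesis $0<\tilde{\gamma}<1$ (strictly positive) is slightly stronger than the mere $|\tilde{\gamma}|<1$ required by \textbf{(H)}, but this stronger form is what Urabe's formula in Lemma \ref{lm-poincare-bif} is built on and plays no role in the stability conclusion.
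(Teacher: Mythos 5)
Your proposal is correct and follows exactly the route of the paper's own proof: the paper likewise observes that the expansion \eqref{Perd-expansion} gives $P'(0)=P_{1}<0$ and then invokes Theorem \ref{lm-unstab-patch}. Your additional bookkeeping (verifying assumption \textbf{(H)} with $m=2$ and matching $D$ with $E$ in \eqref{pert-ODE}) is left implicit in the paper but is entirely consistent with it.
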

\begin{proof}
If $P_{1}<0$, then we have
\begin{eqnarray*}
P'(0)<0.
\end{eqnarray*}
Therefore, the proof is finished by Theorem \ref{lm-unstab-patch}.
\end{proof}

We remark that the formula in Lemma \ref{lm-poincare-bif} gives an analytic formula to determine the sign of $P_{1}$,
although the expression of $P_{1}$ is complicated.
This formula also provides a possibility to numerically give criteria
for the destabilization of synchronous periodic solutions.
As an easy by-product of Proposition \ref{thm-Poincare-Destab},
we have that
the destabilization of synchronous periodic solutions
can not be induced by the identical diffusion rates.
More precisely, we have the following result.

\begin{proposition}\label{thm-TStab-patch}
Suppose that the kinetic system \eqref{2d-ODE} with $\alpha=\alpha_{0}$ has a stable periodic solution
$\psi(t)=(u_{0}(t),v_{0}(t))\in\mathbb{R}^{2}$ that satisfies \eqref{cond-FM}.
Then for sufficiently small $\delta>0$,
the corresponding  synchronous periodic solution $(\psi(t),...,\psi(t))^{T}$ is stable with respect to  patch model \eqref{2d-patch-model}
with the identical diffusion rates.
\end{proposition}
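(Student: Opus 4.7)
The plan is to reduce the identical-diffusion case to a pure time rescaling of the kinetic system, which makes the period function trivially computable, and then read off stability from Lemma \ref{lm-PF-patch}. Identical diffusion rates should be interpreted in the classical sense (as in Henry \cite{Henry-81}) that the diffusion matrix is a scalar multiple of the identity, i.e., $d_{11}=d_{22}=d>0$ and $d_{12}=d_{21}=0$, so $D=dI_{2}$. Under this hypothesis, the perturbation system \eqref{pert-2D-ODE-1} with $\alpha=\alpha_{0}$ becomes $(1+\epsilon d)\dot U=F(U)$, equivalently $\dot U=(1+\epsilon d)^{-1}F(U)$, which is merely a uniform time rescaling of the kinetic system \eqref{2d-ODE-2}.

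Next I would exhibit the bifurcating periodic solutions explicitly: $\psi(t,\epsilon):=\psi\!\left(t/(1+\epsilon d)\right)$ is the unique limit cycle of \eqref{pert-expand} provided by Lemma \ref{lm-poincare-bif}, with minimum period $P(\epsilon)=(1+\epsilon d)p$. Hence the leading coefficient in \eqref{Perd-expansion} is
\begin{equation*}
P_{1}=P'(0)=dp>0.
\end{equation*}
Applying Lemma \ref{lm-PF-patch} to the full patch system \eqref{eq-patch-var}, the $n$ Floquet multipliers of $\mathcal{U}_{0}(t)=(\psi(t)^{T},\ldots,\psi(t)^{T})^{T}$ that perturb from $1$ satisfy $\gamma_{1}(\delta)\equiv 1$ and, for $j=2,\ldots,n$,
\begin{equation*}
\gamma_{j}(\delta)=1-ndp\,\delta+O(\delta^{2}),
\end{equation*}
which lie strictly inside the open unit disk for all sufficiently small $\delta>0$.

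It then remains to control the other $n$ Floquet multipliers of \eqref{eq-patch-var}. At $\delta=0$ these consist of $n$ copies of $\tilde{\gamma}\in(0,1)$ coming from the uncoupled kinetic systems in the $n$ patches, so by \eqref{cond-FM} they have modulus strictly less than one. Since the monodromy operator $\Psi(P,\delta)$ depends analytically on $\delta$, continuity of eigenvalues guarantees that these multipliers remain strictly inside the unit disk for all sufficiently small $\delta>0$. Combining this with the previous paragraph, every Floquet multiplier of $\mathcal{U}_{0}$ other than the simple trivial multiplier $\gamma_{1}(\delta)=1$ (which reflects only time-translation symmetry along the synchronized orbit) lies in the open unit disk, and the stability of $\mathcal{U}_{0}$ with respect to \eqref{2d-patch-model} follows from classical Floquet theory.

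The only delicate point, and the place I would be most careful, is verifying that the trivial multiplier $\gamma_{1}(\delta)\equiv 1$ is algebraically simple so that its presence does not obstruct Lyapunov (orbital) stability. This is guaranteed by the fact that $\tilde{\xi}_{1}=\xi_{1}+\cdots+\xi_{n}$ spans the unique $+1$ direction in the leading-order block $\Lambda_{+}(\delta)$ used in the proof of Lemma \ref{lm-PF-patch}, so the unit eigenvalue retains multiplicity one under perturbation. Beyond this bookkeeping, the argument is essentially just the time-rescaling observation together with Lemma \ref{lm-PF-patch}.
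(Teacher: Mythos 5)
Your proof is correct, and for the key computation it takes a cleaner route than the paper. Where the paper derives $P'(0)=d_{0}p$ by specializing the Urabe-type expansion \eqref{Perd-expansion} of Lemma \ref{lm-poincare-bif} --- checking that $d_{12}=d_{21}=0$ and $d_{11}=d_{22}=d_{0}$ force $fg_{1}-gf_{1}\equiv 0$, hence $I(t)\equiv 0$ in \eqref{df-I-t}, hence $P(\epsilon)=p+\epsilon d_{0}p+O(\epsilon^{2})$ --- you observe that with $D=dI_{2}$ the auxiliary system \eqref{pert-2D-ODE-1} is exactly a uniform time rescaling $\dot U=(1+\epsilon d)^{-1}F(U)$, so the bifurcating limit cycle is explicitly $\psi\left(t/(1+\epsilon d)\right)$ and the period function is \emph{exactly} $P(\epsilon)=(1+\epsilon d)p$, not merely to first order. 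Your argument is more elementary (no need for the integral formula at all) and more transparent about \emph{why} identical rates cannot destabilize: they only reparametrize time, leaving the orbit structure untouched; the paper's route buys consistency with the general machinery it has already set up and illustrates how to use \eqref{Perd-expansion} in practice. From $P'(0)=dp>0$ onward, your argument coincides with the paper's: Lemma \ref{lm-PF-patch} gives $\gamma_{1}(\delta)\equiv 1$ and $\gamma_{j}(\delta)=1-ndp\,\delta+O(\delta^{2})<1$ for $j=2,\dots,n$ and small $\delta>0$, and the remaining multipliers of \eqref{eq-patch-var}, being $n$ copies of $\tilde{\gamma}\in(0,1)$ at $\delta=0$ by \eqref{cond-FM}, stay inside the unit disk by continuity of $\Psi(p,\delta)$ in $\delta$. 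Your closing remark on the simplicity of the trivial multiplier is a point the paper glosses over; your justification is sound, since for $\delta>0$ small the multipliers $\gamma_{2}(\delta),\dots,\gamma_{n}(\delta)$ strictly separate from $1$ while the rest remain bounded away from the unit circle, so $1$ is a simple eigenvalue of the monodromy operator and orbital stability follows from the standard Andronov--Witt-type Floquet criterion.
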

\begin{proof}
We first prove that the period function $P(\epsilon)$ satisfies $P'(0)>0$.
If $d_{11}=d_{22}=d_{0}>0$ and $d_{12}=d_{21}=0$,
then the functions $f_{1}$ and $g_{1}$ in \eqref{pert-expand} are in the form
\begin{eqnarray*}
f_{1}(u,v)=-d_{0}f(u,v), \ \ \ g_{1}(u,v)=-d_{0}g(u,v), \ \ \ (u,v)\in\mathbb{R}^{2}.
\end{eqnarray*}
This yields
\begin{eqnarray*}
f(u,v)g_{1}(u,v)-g(u,v)f_{1}(u,v)=0.
\end{eqnarray*}
Then  $I(t)$  in \eqref{df-I-t} satisfies $I(t)\equiv 0$ for $t\in\mathbb{R}$.
So we can compute
\begin{eqnarray}
\begin{aligned}
P(\epsilon) =p+ \epsilon d_{0}p+O(\epsilon^{2}).
\end{aligned}
\end{eqnarray}
This implies $P'(0)>0$.

Now we prove the stability of $(\psi(t),...,\psi(t))^{T}$ with respect to  patch model \eqref{2d-patch-model}.
Let the notations be defined as in Section \ref{sec-mainresult}.
Since $P'(0)>0$, by Lemma \ref{lm-PF-patch} we have that
the Floquet multipliers $\gamma_{j}(\delta)$ satisfy $\gamma_{1}(\delta)\equiv 1$ for $\delta\geq 0$
and  $\gamma'_{j}(0)<0$ for $j=2,3,...,n$.
Consequently, $\gamma_{j}(\delta)<1$ for sufficiently small $\delta>0$ and $j=2,...,n$.
This together with the fact that $\Psi(p,\delta)$ is continuous with respect to $\delta$
and the condition \eqref{cond-FM}  yields  that
all of the Floquet multipliers of \eqref{eq-patch-var} have modulii less than one except $\gamma_{1}(\delta)$.
Then $(\psi(t),...,\psi(t))^{T}$ is stable with respect to  patch model \eqref{2d-patch-model}
with the identical diffusion rates. This finishes the proof.
\end{proof}

\subsection{Application to small-amplitude bifurcating periodic solutions}
\label{sec-small}

In this section,
we study the destabilization of periodic solutions arising from  Hopf bifurcation.
Based on the normal form theory and the  formal series method,
we give the conditions under which the destabilization of Hopf bifurcating periodic solutions appears.

Without loss of generality, throughout this section  we make the following hypotheses:
\begin{itemize}
\item
The functions $f$ and $g$ in the kinetic system \eqref{2d-ODE} are $C^{\infty}$ in $(u,v,\alpha)$,
and $f(0,0,\alpha)=g(0,0,\alpha)=0$ for all $\alpha\in \mathbb{R}$.
\item The kinetic system \eqref{2d-ODE} has a center-type equilibrium at the origin $O:=(0,0)^{T}$ for $\alpha=0$,
that is, the Jacobian matrix $J(O)$ of system \eqref{2d-ODE} with $\alpha=0$ at the origin
has a pair of purely imaginary eigenvalues $\lambda_{1,2}=\pm {\bf i}\mu_{0}$ for $\mu_{0}>0$.
\end{itemize}

Let $J(\alpha)$ denote the Jacobian matrix of the kinetic system \eqref{2d-ODE} at the origin, that is,
\begin{equation*}
J(\alpha)=\left(
\begin{array}{cc}
f_{u}(0,0,\alpha) &  f_{v}(0,0,\alpha)\\
g_{u}(0,0,\alpha) &  g_{v}(0,0,\alpha)
\end{array}
\right)=:
\left(
\begin{array}{cc}
J_{11}(\alpha) &  J_{12}(\alpha)\\
J_{21}(\alpha) &  J_{22}(\alpha)
\end{array}
\right).
\end{equation*}
Then we can compute
\begin{eqnarray}\label{eq-ep=0}
J_{11}(0)+J_{22}(0)=0, \ \  \
J_{11}(0)J_{22}(0)-J_{12}(0)J_{21}(0)=\mu_{0}^{2}>0.
\end{eqnarray}
By \cite[Lemma 1.1, p.384]{Chow-etal-94},
the kinetic system \eqref{2d-ODE} with $\alpha=0$ in complex coordinates
has the following Poincar\'e-Birkhoff normal form
\begin{eqnarray*}
\dot z={\bf i}\mu_{0}z+C_{1}z^{2}\bar{z}+C_{2}z^{3}\bar{z}^{2}+\cdot\cdot\cdot+C_{k}z^{k+1}\bar{z}^{k}+O(|z|^{2k+3}).
\end{eqnarray*}
The constants $C_{j}$ are called the {\it $j$th Lyapunov coefficients} of system \eqref{2d-ODE} with $\alpha=0$
at the center-type equilibrium $O$.
If the  Lyapunov coefficients $C_{j}$ satisfy
\begin{eqnarray}\label{df-mult}
{\rm Re}(C_{1})=\cdot\cdot\cdot={\rm Re}(C_{k-1})=0, \ \ \  {\rm Re}(C_{k})\neq 0,
\end{eqnarray}
then we say that the kinetic system \eqref{2d-ODE} could undergo a  Hopf bifurcation of order $k$ for $k\geq 1$ at the origin,
and the origin is a weak focus of order $k$.

Recall that the auxiliary system is defined by \eqref{pert-2D-ODE-1}.
When $\epsilon=0$,
system \eqref{pert-2D-ODE-1}  is reduced to the kinetic system \eqref{2d-ODE}.
It is clear that for sufficiently small $|\epsilon|$,
one can transform system \eqref{pert-2D-ODE-1} into the following system
\begin{eqnarray}\label{pert-2D-ODE-2}
\left(
\begin{array}{c}
\dot u\\
\dot v
\end{array}
\right)=
(\mathcal{M}(\epsilon))^{-1}
\left(
\begin{array}{cc}
1+\epsilon d_{22} & -\epsilon d_{12} \\
-\epsilon d_{21} & 1+\epsilon d_{11}
\end{array}
\right)
\left(
\begin{array}{c}
f(u,v,\alpha) \\
g(u,v,\alpha)
\end{array}
\right),
\end{eqnarray}
where
$$\mathcal{M}(\epsilon)=\det(I_{2}+\epsilon D)=(d_{11}d_{22}-d_{12}d_{21})\epsilon^{2}+(d_{11}+d_{22})\epsilon+1.$$
Note that system \eqref{pert-2D-ODE-2} always has an equilibrium at the origin
for all $\alpha\in \mathbb{R}$.
A direct computation yields that
the Jacobian matrix $J(\alpha,\epsilon)$ of system \eqref{pert-2D-ODE-2} at the origin  is in the form
\begin{equation*}
J(\alpha,\epsilon)=
(\mathcal{M}(\epsilon))^{-1}
\left(
\begin{array}{cc}
1+\epsilon d_{22} & -\epsilon d_{12} \\
-\epsilon d_{21} & 1+\epsilon d_{11}
\end{array}
\right)
\left(
\begin{array}{cc}
J_{11}(\alpha) &  J_{12}(\alpha)\\
J_{21}(\alpha) &  J_{22}(\alpha)
\end{array}
\right),
\end{equation*}
and the trace $\mathcal{T}(\alpha,\epsilon)$ of $J(\alpha,\epsilon)$
and the determinant $\mathcal{D}(\alpha,\epsilon)$ of $J(\alpha,\epsilon)$  are given by
\begin{eqnarray}\label{df-D-T}
\begin{aligned}
\mathcal{T}(\alpha,\epsilon)=&\
    (\mathcal{M}(\epsilon))^{-1}\left\{J_{11}(\alpha)+J_{22}(\alpha)\right.\\
    &\left.+\epsilon\left(d_{22}J_{11}(\alpha)+d_{11}J_{22}(\alpha)-d_{12}J_{21}(\alpha)-d_{21}J_{12}(\alpha)\right)\right\},\\
\mathcal{D}(\alpha,\epsilon)=&\
    (\mathcal{M}(\epsilon))^{-1}\det(J(\alpha))=(\mathcal{M}(\epsilon))^{-1}(J_{11}(\alpha)J_{22}(\alpha)-J_{12}(\alpha)J_{21}(\alpha)).
\end{aligned}
\end{eqnarray}
Next we state the results on the periodic solutions bifurcating from the origin.
\begin{lemma}\label{lm-Hopf-bif}
Let $\lambda_{1,2}(\alpha)=A(\alpha)\pm{\bf i}B(\alpha)$ denote the eigenvalues of the matrix $J(\alpha)$.
Suppose that the kinetic system \eqref{2d-ODE} with $\alpha=0$
has a weak focus of order $k$ at the origin, and $A'(0)\neq 0$.
Then the following statements hold:
\begin{enumerate}
\item[{\bf (i)}]
There exist two small constants $\epsilon_{0}>0$ and $\tilde{r}_{0}>0$,
and a smooth function $\alpha(r_{0},\epsilon)$ for $0<r_{0}<\tilde{r}_{0}$ and $|\epsilon|\leq \epsilon_{0}$
such that system \eqref{pert-2D-ODE-2} with $\alpha=\alpha(r_{0},\epsilon)$
has exactly one limit cycle $(u(t,r_{0},\epsilon),v(t,r_{0},\epsilon))$ with period $P(r_{0},\alpha(r_{0},\epsilon))$ near the origin.

\item[{\bf (ii)}]
If the $k$th Lyapunov coefficient $C_{k}$ satisfies  ${\rm Re}(C_{k})<0$ (resp. ${\rm Re}(C_{k})>0$),
then the perturbed limit cycle is stable (resp. unstable),
and each perturbed limit cycle $(u(t,r_{0},\epsilon),v(t,r_{0},\epsilon))$  passes through $(r_{0},0)$.
\end{enumerate}
\end{lemma}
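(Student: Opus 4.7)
The plan is to analyze the Poincar\'e first-return map of \eqref{pert-2D-ODE-2} on a transverse half-line through the origin and to solve the resulting displacement equation for $\alpha$ via the implicit function theorem, in the spirit of the degenerate Hopf-bifurcation framework (cf. \cite{Chow-etal-94,Far-etal-89}). From \eqref{df-D-T} together with \eqref{eq-ep=0}, I first check $\mathcal{T}(0,0)=0$, $\mathcal{D}(0,0)=\mu_{0}^{2}>0$ and $\partial_{\alpha}\mathcal{T}(0,0)=2A'(0)\neq 0$, so that the eigenvalues of $J(\alpha,\epsilon)$ form a complex-conjugate pair $\widetilde{A}(\alpha,\epsilon)\pm{\bf i}\widetilde{B}(\alpha,\epsilon)$ with $\widetilde{A}(0,0)=0$, $\widetilde{B}(0,0)=\mu_{0}$ and $\partial_{\alpha}\widetilde{A}(0,0)=A'(0)\neq 0$ for the entire perturbed family. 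A linear change of coordinates smooth in $(\alpha,\epsilon)$ puts the Jacobian at the origin into standard rotation-dilation form.

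\textbf{Displacement function and existence of the bifurcating cycle.} Introducing polar-type coordinates $(r,\theta)$ near the origin and letting $\Pi(r_{0},\alpha,\epsilon)$ denote the first-return map on the ray $\theta=0$, I set $V(r_{0},\alpha,\epsilon):=\Pi(r_{0},\alpha,\epsilon)-r_{0}$; since the origin is always an equilibrium, this factors as $V=r_{0}\widetilde{V}(r_{0},\alpha,\epsilon)$ with $\widetilde{V}$ of class $C^{\infty}$. A standard formal-series computation yields
\begin{eqnarray*}
\widetilde{V}(r_{0},\alpha,\epsilon)=V_{1}(\alpha,\epsilon)+V_{3}(\alpha,\epsilon)r_{0}^{2}+\cdots+V_{2k+1}(\alpha,\epsilon)r_{0}^{2k}+o(r_{0}^{2k}),
\end{eqnarray*}
where $V_{1}(\alpha,\epsilon)=\exp(2\pi\widetilde{A}(\alpha,\epsilon)/\widetilde{B}(\alpha,\epsilon))-1$. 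At $(\alpha,\epsilon)=(0,0)$, the weak-focus hypothesis \eqref{df-mult} combined with the standard identification of Lyapunov coefficients gives $V_{2j+1}(0,0)=0$ for $j<k$ and $V_{2k+1}(0,0)=\kappa\,{\rm Re}(C_{k})$ for some explicit $\kappa>0$. Because $\widetilde{V}(0,0,0)=0$ and $\partial_{\alpha}\widetilde{V}(0,0,0)=\partial_{\alpha}V_{1}(0,0)=2\pi A'(0)/\mu_{0}\neq 0$, the implicit function theorem produces $\epsilon_{0}>0$, $\widetilde{r}_{0}>0$ and a smooth function $\alpha(r_{0},\epsilon)$ with $\alpha(0,0)=0$ that solves $\widetilde{V}(r_{0},\alpha(r_{0},\epsilon),\epsilon)=0$; this yields the periodic orbit through $(r_{0},0)$ claimed in (i). For uniqueness near the origin at fixed $\alpha=\alpha(r_{0},\epsilon)$, I view $\widetilde{V}$ as a perturbation of a polynomial of degree $k$ in $\rho=r^{2}$ with dominant coefficient $V_{2k+1}$ close to $\kappa\,{\rm Re}(C_{k})\neq 0$, so that $r=r_{0}$ is the unique positive zero in a neighborhood of $r_{0}$.

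\textbf{Stability and main obstacle.} For (ii), differentiating $V=r_{0}\widetilde{V}$ at the cycle and using $\widetilde{V}=0$ there yields $\partial_{r_{0}}V|_{\mathrm{cycle}}=2k\,\kappa\,{\rm Re}(C_{k})\,r_{0}^{2k+1}+o(r_{0}^{2k+1})$, so the transverse Floquet exponent inherits the sign of ${\rm Re}(C_{k})$: the bifurcated cycle is stable when ${\rm Re}(C_{k})<0$ and unstable when ${\rm Re}(C_{k})>0$; the fact that the cycle passes through $(r_{0},0)$ is built into the construction of $\alpha(r_{0},\epsilon)$. The hard part of the argument is the control of the $(\alpha,\epsilon)$-dependence of the Lyapunov-type coefficients $V_{2j+1}(\alpha,\epsilon)$ for the \emph{perturbed} system \eqref{pert-2D-ODE-2}, not merely the kinetic system \eqref{2d-ODE}: one must verify that the normal-form / formal-series reduction can be carried out smoothly in both parameters up to order $2k+1$, so that $V_{2k+1}(\alpha,\epsilon)$ remains close to $\kappa\,{\rm Re}(C_{k})\neq 0$ while $V_{3},\ldots,V_{2k-1}$ stay small; this is what underlies both the applicability of the implicit function theorem in Step 2 and the uniqueness of the bifurcating cycle near the origin.
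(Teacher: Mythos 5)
Your proposal is correct and takes essentially the same route as the paper's proof: reduce \eqref{pert-2D-ODE-2} to rotation--dilation form smooth in $(\alpha,\epsilon)$, pass to polar coordinates as in \eqref{2D-2}, factor the displacement map as $r_{0}\tilde{H}(r_{0},\alpha,\epsilon)$ with leading term $\exp\left(2\pi A(\alpha,\epsilon)/B(\alpha,\epsilon)\right)-1$ and $r_{0}^{2k}$-coefficient $2\pi\,{\rm Re}(C_{k})/\mu_{0}$ at the critical parameter (your $\kappa=2\pi/\mu_{0}$), then solve $\tilde{H}=0$ for $\alpha(r_{0},\epsilon)$ by the implicit function theorem using $\partial_{\alpha}\tilde{H}(0,0,0)=2\pi A'(0)/\mu_{0}\neq 0$, with stability in (ii) read off from the sign of the derivative of the return map. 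The paper is in fact terser than you at precisely the two points you flag---uniqueness of the cycle and the smooth two-parameter dependence of the intermediate coefficients---so your added remarks are refinements rather than deviations.
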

\begin{proof}
Let $\lambda_{1,2}(\alpha,\epsilon)=A(\alpha,\epsilon)\pm{\bf i}B(\alpha,\epsilon)$
denote the eigenvalues of the Jacobian matrix  $J(\alpha,\epsilon)$.
System \eqref{pert-2D-ODE-2} can be
transformed into
\begin{eqnarray} \label{2D-1}
\begin{aligned}
\dot u  &= A(\alpha,\epsilon)u-B(\alpha,\epsilon)v+f_{1}(u,v,\alpha,\epsilon),
\\
\dot v  &= B(\alpha,\epsilon)u+A(\alpha,\epsilon)v+g_{1}(u,v,\alpha,\epsilon).
\end{aligned}
\end{eqnarray}
See the detailed proof in \cite[Appendix A]{Yi-21}.
Set $(u,v)=(r\cos\theta,r\sin\theta)$. Then
\begin{eqnarray} \label{2D-2}
\begin{aligned}
\dot r  &= A(\alpha,\epsilon)r
      +\cos\theta\cdot f_{1}(r\cos\theta,r\sin\theta,\epsilon)+\sin\theta\cdot g_{1}(r\cos\theta,r\sin\theta,\epsilon),
\\
r\dot \theta  &= B(\alpha,\epsilon) r
     +\cos\theta\cdot g_{1}(r\cos\theta,r\sin\theta,\epsilon)-\sin\theta\cdot f_{1}(r\cos\theta,r\sin\theta,\epsilon).
\end{aligned}
\end{eqnarray}
Let $(r(t,r_{0},\alpha,\epsilon),\theta(t,r_{0},\alpha,\epsilon))$
denote the solution of \eqref{2D-2}  with $(r(0),\theta(0))=(r_{0},0)$.
Then for sufficiently small $|\alpha|+|\epsilon|$,
we can define the displacement map by
\begin{eqnarray*}
H(r_{0},\alpha,\epsilon)=r(2\pi,r_{0},\alpha,\epsilon)-r_{0}.
\end{eqnarray*}
Since $H(0,\alpha,\epsilon)=0$ for all $\alpha$ and $\epsilon$,
we write  $H(r_{0},\alpha,\epsilon)$ as
\begin{eqnarray*}
H(r_{0},\alpha,\epsilon)=r_{0}\tilde{H}(r_{0},\alpha,\epsilon),
\end{eqnarray*}
where $\tilde{H}(r_{0},\alpha,\epsilon)$ has the expansion as the form
\begin{eqnarray*}
\tilde{H}(r_{0},\alpha,\epsilon)=\left(\exp\left(2\pi\frac{A(\alpha,\epsilon)}{B(\alpha,\epsilon)}\right)-1\right)
      +H_{1}(\alpha,\epsilon)r_{0}+O(r_{0}^{2}).
\end{eqnarray*}
By  \eqref{df-mult} and \eqref{2D-2}, we further have
\begin{eqnarray}\label{formu-1}
\tilde{H}(r_{0},0,0)=\frac{2\pi {\rm Re}(C_{k})}{\mu_{0}}r_{0}^{2k}+O(r_{0}^{2k+1}).
\end{eqnarray}
A direct computation yields
\begin{eqnarray}\label{formu-2}
\begin{aligned}
\frac{\partial \tilde{H}}{\partial \alpha}(0,0,0)
=&\ \frac{\partial \tilde{H}}{\partial \alpha}(0,\alpha,0)|_{\alpha=0}\\
=&\ \frac{\partial }{\partial \alpha}\left(\exp\left(2\pi\frac{A(\alpha,0)}{B(\alpha,0)}\right)-1\right)|_{\alpha=0}\\
=&\  \frac{2\pi A'(0)}{\mu_{0}}\neq 0.
\end{aligned}
\end{eqnarray}
This together with   the implicit function theorem yields that
there exist two small constants $\epsilon_{0}>0$ and $\tilde{r}_{0}>0$,
and a smooth function $\alpha(r_{0},\epsilon)$ for $0<r_{0}<\tilde{r}_{0}$ and $|\epsilon|\leq \epsilon_{0}$
such that
$
\tilde{H}(r_{0},\alpha(r_{0},\epsilon),\epsilon)=0
$
for sufficiently small $|r_{0}|+|\epsilon|$.
Thus, we obtain {\bf (i)}.
The statements in {\bf (ii)} can be proved by \eqref{2D-2}.
Therefore, the proof is now complete.
\end{proof}

By the above lemma, we can further verify that there exists a constant $\alpha_{0}>0$
such that for each $(\alpha,\epsilon)$ with $|\alpha|<\alpha_{0}$ and $|\epsilon|<\epsilon_{0}$,
system \eqref{pert-2D-ODE-2} has a small-amplitude periodic orbit bifurcating from the origin.
Let $T(\alpha,\epsilon)$ denote the corresponding period.
Then we call $T(\alpha,\epsilon)$ the {\it period function} for this family of periodic solutions
arising from Hopf bifurcation.

Note that the period function $T(\alpha,\epsilon)$ depends not only on $\alpha$ but also on $\epsilon$.
Then the results in \cite{Chen-Huang-22,Gasull-05,Hassard-81}, where the period function depends on a single parameter,
is not applicable to  $T(\alpha,\epsilon)$.
Now we give the formula of $T(\alpha,\epsilon)$ in the next lemma.

\begin{lemma} \label{lm-Perd-Hopf}
Let $T(\alpha,\epsilon)$ denote the period function of the Hopf bifurcating periodic solutions.
Then $T(\alpha,\epsilon)$ satisfies the formula
\begin{eqnarray}\label{T-expansion}
\frac{2\pi}{T(\alpha,\epsilon)}=B(\alpha,\epsilon)+\sum_{j=1}^{k} {\rm Im}(C_{j}(\alpha,\epsilon))r_{0}^{2j}+O(r_{0}^{2k+1}),
\end{eqnarray}
where $r_{0}$ satisfies
\begin{eqnarray}\label{r0-expansion}
\begin{aligned}
r_{0}^{2k}=R_{0}(\alpha,\epsilon)=&
-\frac{\epsilon}{2{\rm Re}(C_{k})}\left(d_{22}J_{11}(0)+d_{11}J_{22}(0)-d_{12}J_{21}(0)-d_{21}J_{12}(0)\right)\\
& -\frac{A'(0)}{{\rm Re}(C_{k})}\alpha
+(|(\alpha,\epsilon)|^{2}).
\end{aligned}
\end{eqnarray}
for sufficiently small $|\alpha|+|\epsilon|$.
\end{lemma}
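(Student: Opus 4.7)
The plan is to derive \eqref{T-expansion} from the rotation component of a parameter-dependent Poincar\'e-Birkhoff normal form and to derive \eqref{r0-expansion} by an implicit-function expansion of the displacement map $\tilde H$ already built in Lemma \ref{lm-Hopf-bif}.

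\textbf{Step 1 (Parameter-dependent normal form).} First I would upgrade the Poincar\'e-Birkhoff normalization of \cite[Lemma 1.1, p.384]{Chow-etal-94}, used there only at $\alpha=0$, to a smooth family in the parameter pair $(\alpha,\epsilon)$. Since the Jacobian $J(\alpha,\epsilon)$ has eigenvalues $A(\alpha,\epsilon)\pm{\bf i}B(\alpha,\epsilon)$ depending smoothly on $(\alpha,\epsilon)$ and lying off the real axis near the origin, a smooth near-identity change of coordinates puts system \eqref{pert-2D-ODE-2} in the form
\[
\dot z = \bigl(A(\alpha,\epsilon)+{\bf i}B(\alpha,\epsilon)\bigr)\,z + \sum_{j=1}^{k}C_{j}(\alpha,\epsilon)\,z^{j+1}\bar z^{j}+O(|z|^{2k+3}),
\]
with $C_{j}(0,0)=C_{j}$. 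Passing to polar coordinates $z=re^{{\bf i}\theta}$ yields
\[
\dot r = A(\alpha,\epsilon)\,r+\sum_{j=1}^{k}\mathrm{Re}\,C_{j}(\alpha,\epsilon)\,r^{2j+1}+O(r^{2k+2}),
\]
\[
\dot\theta = B(\alpha,\epsilon)+\sum_{j=1}^{k}\mathrm{Im}\,C_{j}(\alpha,\epsilon)\,r^{2j}+O(r^{2k+1}).
\]

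\textbf{Step 2 (Period formula).} On the bifurcating limit cycle the non-rotation-invariant residual is $O(r_{0}^{2k+2})$, so $r(t)\equiv r_{0}+O(r_{0}^{2k+2})$ along the orbit. Integrating $dt=d\theta/\dot\theta$ over one revolution gives
\[
T(\alpha,\epsilon) = \frac{2\pi}{B(\alpha,\epsilon)+\sum_{j=1}^{k}\mathrm{Im}\,C_{j}(\alpha,\epsilon)\,r_{0}^{2j}+O(r_{0}^{2k+1})},
\]
which rearranges to \eqref{T-expansion}.

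\textbf{Step 3 (Amplitude $r_{0}$).} For \eqref{r0-expansion} I would apply the implicit function theorem to the displacement function $\tilde H(r_{0},\alpha,\epsilon)$ built in Lemma \ref{lm-Hopf-bif}. Formulas \eqref{formu-1} and \eqref{formu-2} already provide
\[
\tilde H(r_{0},0,0) = \tfrac{2\pi\mathrm{Re}(C_{k})}{\mu_{0}}r_{0}^{2k}+O(r_{0}^{2k+1}),\qquad \tfrac{\partial \tilde H}{\partial\alpha}(0,0,0)=\tfrac{2\pi A'(0)}{\mu_{0}}.
\]
The missing ingredient is $\tfrac{\partial\tilde H}{\partial\epsilon}(0,0,0)=\tfrac{2\pi}{\mu_{0}}\tfrac{\partial A}{\partial\epsilon}(0,0)$, obtained from $\tilde H(0,0,\epsilon)=\exp\bigl(2\pi A(0,\epsilon)/B(0,\epsilon)\bigr)-1$. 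Using $2A=\mathcal{T}$ from \eqref{df-D-T}, $\mathcal{M}(0)=1$, and the cancellation $J_{11}(0)+J_{22}(0)=0$ from \eqref{eq-ep=0}, one computes
\[
\tfrac{\partial A}{\partial\epsilon}(0,0) = \tfrac{1}{2}\bigl(d_{22}J_{11}(0)+d_{11}J_{22}(0)-d_{12}J_{21}(0)-d_{21}J_{12}(0)\bigr).
\]
Substituting these three partial derivatives into the Taylor expansion of $\tilde H(r_{0},\alpha,\epsilon)=0$ and solving for $r_{0}^{2k}$ to leading order produces \eqref{r0-expansion} exactly.

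\textbf{Main obstacle.} The principal technical subtlety is ensuring the normalization in Step 1 depends smoothly and jointly on $(\alpha,\epsilon)$, so that the coefficients $C_{j}(\alpha,\epsilon)$ are well-defined smooth functions and the $O(r_{0}^{2k+1})$ remainders in Step 2 are uniform for small $(\alpha,\epsilon)$. The single-parameter period-function machinery of \cite{Chen-Huang-22,Gasull-05,Hassard-81} does not directly cover this two-parameter setting, so most of the bookkeeping effort will go into tracking how the remainders in $r$ interact with the joint $(\alpha,\epsilon)$-expansion of the $C_{j}$'s and of $\tilde H$, so that both the $O(r_{0}^{2k+1})$ in \eqref{T-expansion} and the $O(|(\alpha,\epsilon)|^{2})$ in \eqref{r0-expansion} are justified uniformly.
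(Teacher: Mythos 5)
Your proposal is correct in substance, and its Step 3 coincides with the paper's own argument: the paper likewise applies the implicit function theorem to $\tilde H$, combining \eqref{formu-1}, \eqref{formu-2} with the computation
\begin{equation*}
\frac{\partial \tilde{H}}{\partial \epsilon}(0,0,0)
=\frac{\pi}{\mu_{0}}\left(d_{22}J_{11}(0)+d_{11}J_{22}(0)-d_{12}J_{21}(0)-d_{21}J_{12}(0)\right),
\end{equation*}
which agrees exactly with your $\frac{2\pi}{\mu_{0}}\frac{\partial A}{\partial\epsilon}(0,0)$ via $2A=\mathcal{T}$, $\mathcal{M}(0)=1$ and $J_{11}(0)+J_{22}(0)=0$. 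For the period formula \eqref{T-expansion}, however, you take a genuinely different route. The paper starts from the same $(\alpha,\epsilon)$-dependent normal form but then uses a Urabe-type harmonic ansatz: it rescales time by $\tau=t/T(\alpha,\epsilon)$, writes $z=r_{0}e^{2\pi\mathbf{i}\tau}\eta(\tau,r_{0},\epsilon)$ with $\eta$ of period one, expands $\eta=\eta_{0}+\eta_{1}+\cdots$ in homogeneous polynomials in $(r_{0},\epsilon)$, and shows degree by degree that $\eta_{0}\equiv 1$ while $\eta_{j}\equiv 0$ for $1\leq j\leq k$ (each correction solves an equation of the form $\frac{d\eta_{j}}{d\tau}=\mathrm{const}$ with periodic boundary conditions and zero initial data, forcing it to vanish); \eqref{T-expansion} then falls out of the resulting balance in \eqref{eq-eta}. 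Your polar-coordinate derivation, integrating $dt=d\theta/\dot\theta$ over one revolution, is more elementary and makes the formula transparent ($2\pi/T$ is the mean angular velocity along the cycle), while the paper's formal-series scheme buys built-in uniform control of the remainders jointly in $(r_{0},\epsilon)$ --- precisely the bookkeeping you identify as the main obstacle.

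One step in your Step 2 needs to be made explicit. The smallness of the non-rotation-invariant residual alone does not yield $r(t)=r_{0}+O(r_{0}^{2k+2})$: for an arbitrary parameter value the rotation-invariant radial drift $r\bigl(A(\alpha,\epsilon)+\sum_{j=1}^{k}\mathrm{Re}(C_{j}(\alpha,\epsilon))\,r^{2j}\bigr)$ is of order $r_{0}\bigl(|\alpha|+|\epsilon|+r_{0}^{2}\bigr)$, far larger than the residual, and $r(t)$ would drift away from $r_{0}$ at that rate. What saves the estimate is that you are on the closed orbit: since the orbit returns to $(r_{0},0)$, the displacement equation of Lemma \ref{lm-Hopf-bif} is in force (i.e., $\alpha=\alpha(r_{0},\epsilon)$), which pins $A(\alpha,\epsilon)+\sum_{j=1}^{k}\mathrm{Re}(C_{j}(\alpha,\epsilon))\,r_{0}^{2j}$ down to remainder order; a Gronwall estimate over one revolution then gives the claimed bound on $r(t)-r_{0}$, and with it the $O(r_{0}^{2k+1})$ remainder in \eqref{T-expansion}. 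With that sentence added, your argument closes and reproduces both \eqref{T-expansion} and \eqref{r0-expansion}.
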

\begin{proof}
By \cite[Lemma 1.1, p.384]{Chow-etal-94},
system \eqref{2D-1} can be normalized into
\begin{eqnarray*}
\dot z=(A(\alpha,\epsilon)+{\bf i}B(\alpha,\epsilon))z+\sum_{j=1}^{k}C_{j}(\alpha,\epsilon)z^{j+1}\bar{z}^{j}+O(|z|^{2k+3}).
\end{eqnarray*}
Let $z(t,r_{0},\epsilon)$ denote the bifurcating periodic solution with $z(0,r_{0},\epsilon)=r_{0}$.
Define
\[
\tau=t/T(\alpha,\epsilon),\ \ \ \ \ z(t,r_{0},\epsilon)=r_{0}e^{2\pi {\bf i}\tau}\eta(\tau,r_{0},\epsilon),
\]
where $\eta(\tau+1,r_{0},\epsilon)=\eta(\tau,r_{0},\epsilon)$ for each $\tau\in\mathbb{R}$.
Then
\begin{eqnarray}\label{eq-eta}
(2\pi{\bf i})\eta+\frac{d\eta}{d\tau}
=T(\alpha,\epsilon)\eta \left\{\lambda(\alpha,\epsilon)+\sum_{j=1}^{k}C_{j}(\alpha,\epsilon)r_{0}^{2j}(\eta\bar{\eta})^{j}\right\}+O(r_{0}^{2k+1}).
\end{eqnarray}
We expand $\eta(\tau,r_{0},\epsilon)$ as the form
\begin{eqnarray*}
\eta(\tau,r_{0},\epsilon)=\eta_{0}(\tau)+\eta_{1}(\tau,r_{0},\epsilon)+\eta_{2}(\tau,r_{0},\epsilon)
     +\cdot\cdot\cdot+\eta_{k}(\tau,r_{0},\epsilon)+\cdot\cdot\cdot,
\end{eqnarray*}
where $\eta_{j}(\tau,\alpha,\epsilon)$  are periodic functions with period one,
the homogeneous polynomials of $j$-th degree with respect to $r_{0}$ and $\epsilon$,
and satisfy
\[
\eta_{0}(0)=1,\ \ \ \ \eta_{j}(0,r_{0},\epsilon)=0,\ \ \ j\geq 1.
\]
Substituting the expansion of $\eta(\tau,r_{0},\epsilon)$ into \eqref{eq-eta} and then comparing the term of the zeroth degree with respect to $r_{0}$ and $\epsilon$,
we have
\[
(2\pi{\bf i})\eta_{0}+\frac{d\eta_{0}}{d\tau}=(2\pi{\bf i})\eta_{0}, \ \ \ \ \eta_{0}(0)=1.
\]
This yields $\eta_{0}(\tau)\equiv 1$ for $\tau\in\mathbb{R}$.
Comparing the term of the first degree yields
\[
(2\pi{\bf i})\eta_{1}+\frac{d\eta_{1}}{d\tau}=(2\pi{\bf i})\eta_{1}+d_{1}(\alpha,\epsilon), \ \ \ \ \eta_{1}(0)=0,
\]
where $d_{1}(\alpha,\epsilon)$ is a constant term.
Recall that $\eta_{1}(\tau,r_{0},\epsilon)$ is periodic with respect to $\tau$.
Then $\eta_{1}(\tau,r_{0},\epsilon)=0$ for each $\tau\in \mathbb{R}$.
Similarly, we can prove that $\eta_{j}(\tau,r_{0},\epsilon)=0$ for $2\leq j\leq k$.
This together with \eqref{eq-eta} yields \eqref{T-expansion}.
By applying the implicit function theorem, \eqref{formu-1}, \eqref{formu-2} and
\begin{eqnarray*}
\frac{\partial \tilde{H}}{\partial \epsilon}(0,0,0)
\!\!\!&=&\!\!\! \frac{\partial \tilde{H}}{\partial \epsilon}(0,0,\epsilon)|_{\epsilon=0}\\
\!\!\!&=&\!\!\!\frac{\partial }{\partial \epsilon}\left(\exp\left(2\pi\frac{A(0,\epsilon)}{B(0,\epsilon)}\right)-1\right)|_{\epsilon=0}\\
\!\!\!&=&\!\!\!
\frac{\pi}{\mu_{0}}\left(d_{22}J_{11}(0)+d_{11}J_{22}(0)-d_{12}J_{21}(0)-d_{21}J_{12}(0)\right),
\end{eqnarray*}
we obtain \eqref{r0-expansion}. This finishes the proof.
\end{proof}

Finally, we establish the existence and destabilization of Hopf bifurcating periodic solutions
for patch model \eqref{2d-patch-model}.

\begin{proposition}\label{thm-Hopf-exist}
Suppose that
the kinetic system \eqref{2d-ODE} with $\alpha=0$ has a weak focus of order $k$ at the origin, $A'(0)\neq 0$,
and the related Lyapunov coefficients $C_{j}$ satisfy
\begin{eqnarray*}
{\rm Re}(C_{1})=\cdot\cdot\cdot={\rm Re}(C_{k-1})=0, \ \  {\rm Re}(C_{k})<0.
\end{eqnarray*}
Then there exists a constant $\alpha_{0}>0$ such that
\begin{enumerate}
\item[{\bf (i)}] if $|\alpha|<\alpha_{0}$ and $\alpha A'(0)>0$,
then the kinetic system \eqref{2d-ODE} has a stable periodic solution
$\psi(t,\alpha):=(u(t,\alpha),v(t,\alpha))$ bifurcating from the origin,
and $\psi(t,\alpha)$ tends to the origin as $\alpha\to 0$.

\item[{\bf (ii)}] if $|\alpha|<\alpha_{0}$ and $\alpha A'(0)>0$,
then patch model \eqref{2d-patch-model} has
a  synchronous periodic solution $\mathcal{U}(t,\alpha):=(\psi(t,\alpha),...,\psi(t,\alpha))^{T}\in\mathbb{R}^{2n}$.
\end{enumerate}
\end{proposition}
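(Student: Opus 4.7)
The plan is to deduce both statements directly from Lemma \ref{lm-Hopf-bif} specialized to $\epsilon=0$ together with the structural fact that the cross-diffusion coupling in \eqref{2d-patch-model} vanishes on the synchrony subspace. For part \textbf{(i)}, I would apply Lemma \ref{lm-Hopf-bif} to the kinetic system \eqref{2d-ODE} by reading \eqref{pert-2D-ODE-2} at $\epsilon=0$. From the proof of that lemma, the displacement map admits the expansion
\[
\tilde{H}(r_{0},\alpha,0) \;=\; \frac{2\pi A'(0)}{\mu_{0}}\,\alpha \;+\; \frac{2\pi\,\mathrm{Re}(C_{k})}{\mu_{0}}\,r_{0}^{2k} \;+\; \text{h.o.t.},
\]
so the implicit function theorem (using $A'(0)\neq 0$) produces a smooth solution branch $\alpha=\alpha(r_{0},0)$ with leading-order behavior
\[
\alpha(r_{0},0) \;=\; -\,\frac{\mathrm{Re}(C_{k})}{A'(0)}\,r_{0}^{2k} \;+\; O(r_{0}^{2k+1}).
\]
Since $\mathrm{Re}(C_{k})<0$, the leading coefficient has the same sign as $A'(0)$, hence $\alpha(r_{0},0)\,A'(0)>0$ for small $r_{0}>0$. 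The map $r_{0}\mapsto \alpha(r_{0},0)$ is strictly monotone in $r_{0}^{2k}$ near zero and can therefore be inverted to yield a smooth $r_{0}=r_{0}(\alpha)$ on a one-sided interval $\{0<\alpha A'(0)<\alpha_{0}A'(0)\}$ with $r_{0}(\alpha)\to 0$ as $\alpha\to 0$. Stability of the resulting limit cycle $\psi(t,\alpha)$ then follows from Lemma \ref{lm-Hopf-bif}\,\textbf{(ii)}, and the convergence $\psi(t,\alpha)\to 0$ as $\alpha\to 0$ is immediate from $r_{0}(\alpha)\to 0$ together with continuous dependence of the flow of \eqref{2d-ODE} on initial data over the bounded period interval $[0,P(r_{0}(\alpha),\alpha)]$.

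For part \textbf{(ii)}, the observation is purely structural: every coupling term in \eqref{2d-patch-model} is a linear combination of differences $(u_{i}-u_{j})$ and $(v_{i}-v_{j})$, and therefore vanishes identically on the synchrony manifold $\{u_{1}=\cdots=u_{n},\ v_{1}=\cdots=v_{n}\}$. Consequently, if $\psi(t,\alpha)$ solves the kinetic system \eqref{2d-ODE}, then its diagonal lift $(\psi(t,\alpha)^{T},\ldots,\psi(t,\alpha)^{T})^{T}\in\mathbb{R}^{2n}$ automatically satisfies \eqref{2d-patch-model}, giving the synchronous periodic solution $\mathcal{U}(t,\alpha)$.

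The only mild obstacle is the sign bookkeeping in part \textbf{(i)}, namely verifying that after inverting $r_{0}\mapsto \alpha(r_{0},0)$ the admissible parameter region is precisely $\alpha A'(0)>0$; this depends on correctly combining the signs of $\mathrm{Re}(C_{k})<0$ and $A'(0)\neq 0$ in the leading coefficient $-\mathrm{Re}(C_{k})/A'(0)$. Everything else is a direct specialization of Lemma \ref{lm-Hopf-bif} or an immediate consequence of the diagonal invariance of \eqref{2d-patch-model}.
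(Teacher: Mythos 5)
Your proposal is correct and takes essentially the same approach as the paper: set $\epsilon=0$ in \eqref{pert-2D-ODE-2}, combine the expansions \eqref{formu-1}--\eqref{formu-2} from the proof of Lemma \ref{lm-Hopf-bif} to get $\alpha(r_{0},0)=-\frac{{\rm Re}(C_{k})}{A'(0)}r_{0}^{2k}+O(r_{0}^{2k+1})$, conclude from ${\rm Re}(C_{k})<0$ that the branch lives on $\alpha A'(0)>0$ with stability from Lemma \ref{lm-Hopf-bif}\,\textbf{(ii)}, and obtain \textbf{(ii)} from the invariance of the synchrony subspace. In fact your write-up is slightly more explicit than the paper's, which leaves the inversion $r_{0}=r_{0}(\alpha)$ and the vanishing of the difference couplings on the diagonal implicit.
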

\begin{proof}
Let $\epsilon=0$ in system \eqref{pert-2D-ODE-2}.
Since $A'(0)\neq 0$ and  the $k$th Lyapunov coefficient $C_{k}$ satisfies  ${\rm Re}(C_{k})<0$,
by the formulas \eqref{formu-1} and \eqref{formu-2} we have
\begin{eqnarray} \label{alp-exp}
\alpha(r_{0},0)=-\frac{{\rm Re}(C_{k})}{A'(0)}r_{0}^{2k}+O(r_{0}^{2k+1}).
\end{eqnarray}
By the proof for  Lemma \ref{lm-Hopf-bif},
there exists a sufficiently small $\alpha_{0}>0$
such that the kinetic system \eqref{2d-ODE} with $|\alpha|<\alpha_{0}$ and $\alpha A'(0)>0$
has a stable periodic solution $(u(t,\alpha),v(t,\alpha))$ bifurcating from the origin.
This implies the existence  of perturbed periodic solutions for \eqref{2d-patch-model}.
Therefore,  the proof is now complete.
\end{proof}

\begin{proposition}\label{thm-Hopf-Destab}
Suppose that the conditions in Theorem \ref{thm-Hopf-exist} hold.
If $|\alpha|<\alpha_{0}$, $\alpha A'(0)>0$,
\begin{eqnarray}\label{cond-ImCk}
{\rm Im}(C_{1})=\cdot\cdot\cdot={\rm Im}(C_{k_{1}-1})=0,& \ \ \  {\rm Im}(C_{k_{1}})\neq 0,
\end{eqnarray}
where $k_{1}$ satisfies  $1\leq k_{1}\leq k$,
and one of the following two conditions holds:
\begin{enumerate}
\item[{\bf (C1)}]
 $k_{1}<k$ and
\[
{\rm Im}(C_{k_{1}})
\left(d_{22}J_{11}(0)+d_{11}J_{22}(0)-d_{12}J_{21}(0)-d_{21}J_{12}(0)\right)>0.
\]

\item[{\bf (C2)}] $k_{1}=k$ and
\[
\mu_{0}(d_{11}+d_{22})+\frac{{\rm Im}(C_{k}(0,0))}{{\rm Re}(C_{k}(0,0))}
\left(d_{22}J_{11}(0)+d_{11}J_{22}(0)-d_{12}J_{21}(0)-d_{21}J_{12}(0)\right)<0.
\]
\end{enumerate}
then there exists a small constant  $\hat{\alpha}_{0}$ with $0<\hat{\alpha}_{0}<\alpha_{0}$
such that for each $\alpha$ with $0<|\alpha|<\hat{\alpha}_{0}$ and  sufficiently small $\delta>0$,
the synchronous periodic solution $\mathcal{U}(t,\alpha)$ is unstable with respect to  patch model \eqref{2d-patch-model}.
\end{proposition}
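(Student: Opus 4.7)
The plan is to invoke Theorem~\ref{lm-unstab-patch} with the kinetic system fixed at parameter~$\alpha$ and with the single-parameter perturbation given by~\eqref{pert-2D-ODE-1}. By Proposition~\ref{thm-Hopf-exist}\,(i), for $|\alpha|<\alpha_{0}$ with $\alpha A'(0)>0$ the kinetic system~\eqref{2d-ODE} has a stable Hopf periodic solution $\psi(\cdot,\alpha)$, so its Floquet multipliers satisfy hypothesis~{\bf (H)}. The perturbed system~\eqref{pert-2D-ODE-2} is exactly of the form~\eqref{pert-ODE}, and the bifurcating periodic orbit produced by Lemma~\ref{lm-Hopf-bif} has period $T(\alpha,\epsilon)$, which plays the role of $P(\epsilon)$ in Theorem~\ref{lm-unstab-patch}. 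Hence it suffices to show $\partial T/\partial\epsilon(\alpha,0)<0$, or equivalently $\partial(2\pi/T)/\partial\epsilon(\alpha,0)>0$, for $\alpha$ sufficiently small with $\alpha A'(0)>0$.

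Starting from~\eqref{T-expansion} and writing $r_{0}^{2j}=R_{0}^{j/k}$ with $R_{0}$ given by~\eqref{r0-expansion}, differentiation at $\epsilon=0$ yields
\begin{equation*}
\frac{\partial}{\partial\epsilon}\!\left(\frac{2\pi}{T}\right)\!\bigg|_{(\alpha,0)}
=\frac{\partial B}{\partial\epsilon}(\alpha,0)
+\sum_{j=1}^{k}\frac{\partial\,{\rm Im}(C_{j})}{\partial\epsilon}(\alpha,0)\,R_{0}^{j/k}
+\sum_{j=1}^{k}{\rm Im}(C_{j}(\alpha,0))\,\frac{j}{k}\,R_{0}^{j/k-1}\frac{\partial R_{0}}{\partial\epsilon}(\alpha,0)
+R_{*}(\alpha),
\end{equation*}
where $R_{*}(\alpha)$ is the contribution of $\partial_{\epsilon}$ applied to the $O(r_{0}^{2k+1})$ tail. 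A direct computation from~\eqref{df-D-T} gives $\partial B/\partial\epsilon(0,0)=-\mu_{0}(d_{11}+d_{22})/2$, while~\eqref{r0-expansion} gives $R_{0}(\alpha,0)=-A'(0)\alpha/{\rm Re}(C_{k})+O(\alpha^{2})>0$ and $\partial R_{0}/\partial\epsilon(\alpha,0)=-\Delta/(2\,{\rm Re}(C_{k}))+O(\alpha)$, where $\Delta:=d_{22}J_{11}(0)+d_{11}J_{22}(0)-d_{12}J_{21}(0)-d_{21}J_{12}(0)$.

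Under~\eqref{cond-ImCk}, each summand with $j<k_{1}$ has ${\rm Im}(C_{j}(\alpha,0))=O(\alpha)$ and $R_{0}^{j/k-1}=O(\alpha^{j/k-1})$, so its contribution is of order $\alpha^{j/k}=o(1)$. In case~{\bf (C1)} ($k_{1}<k$), the $j=k_{1}$ summand is dominant, of order $\alpha^{k_{1}/k-1}\to+\infty$ as $\alpha\to 0$, and its sign equals that of ${\rm Im}(C_{k_{1}}(0,0))\cdot\Delta$, since $-1/{\rm Re}(C_{k}(0,0))>0$ and $R_{0}(\alpha,0)>0$. Hypothesis~{\bf (C1)} makes this sign positive, so for $\hat{\alpha}_{0}$ small enough the diverging dominant term overwhelms the $O(1)$ contribution from $\partial B/\partial\epsilon$, all other $j$-summands, and the remainder $R_{*}(\alpha)$, giving $\partial(2\pi/T)/\partial\epsilon>0$. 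In case~{\bf (C2)} ($k_{1}=k$), the $j=k$ summand is $O(1)$ while all lower-$j$ summands vanish as $\alpha\to 0$, and one obtains
\begin{equation*}
\frac{\partial}{\partial\epsilon}\!\left(\frac{2\pi}{T}\right)\!\bigg|_{(\alpha,0)}
=-\frac{\mu_{0}(d_{11}+d_{22})}{2}-\frac{{\rm Im}(C_{k}(0,0))\,\Delta}{2\,{\rm Re}(C_{k}(0,0))}+o(1),
\end{equation*}
which is positive for sufficiently small $|\alpha|$ precisely by hypothesis~{\bf (C2)}. In either case $P'(0)<0$, and Theorem~\ref{lm-unstab-patch} delivers the claimed instability of $\mathcal{U}(t,\alpha)$.

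The main difficulty lies in the asymptotic bookkeeping in case~{\bf (C1)}: since the dominant contribution blows up like $\alpha^{k_{1}/k-1}$ as $\alpha\to 0$, one must verify that it genuinely dominates the $O(1)$ piece from $\partial B/\partial\epsilon$, every other summand, and the tail $R_{*}(\alpha)$, uniformly for $0<|\alpha|<\hat{\alpha}_{0}$, so that $\hat{\alpha}_{0}$ can be chosen small enough to guarantee strict positivity. The other delicate point is tracking the sign of $-1/{\rm Re}(C_{k}(0,0))$ consistently through the chain rule so that the final sign condition matches the statement of {\bf (C1)} and {\bf (C2)}.
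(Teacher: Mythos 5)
Your proposal is correct and follows essentially the same route as the paper: you differentiate the period formula of Lemma~\ref{lm-Perd-Hopf} at $\epsilon=0$, compute $\partial B/\partial\epsilon(0,0)=-\mu_{0}(d_{11}+d_{22})/2$ and $\partial R_{0}/\partial\epsilon$ from \eqref{df-D-T} and \eqref{r0-expansion}, and split into the cases {\bf (C1)} (the $j=k_{1}$ summand blows up like $R_{0}^{k_{1}/k-1}$ with sign ${\rm Im}(C_{k_{1}})\Delta$ since ${\rm Re}(C_{k})<0$) and {\bf (C2)} (the $j=k$ summand is $O(1)$), concluding via Theorem~\ref{lm-unstab-patch}. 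Your treatment is in fact slightly more careful than the paper's, since you explicitly account for the $\partial_{\epsilon}{\rm Im}(C_{j})$ terms and the $O(r_{0}^{2k+1})$ tail, which the paper absorbs into its $O(|\alpha|)$ and $o(1)$ remainders without comment.
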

\begin{proof}
We first compute $\frac{\partial B}{\partial \epsilon}(0,0)$.
It is clear that $$B(\alpha,\epsilon)=\frac{\sqrt{4\mathcal{D}(\alpha,\epsilon)-(\mathcal{T}(\alpha,\epsilon))^{2}}}{2},$$
where $\mathcal{D}(\alpha,\epsilon)$ and $\mathcal{T}(\alpha,\epsilon)$ are defined by \eqref{df-D-T}.
Then
\begin{eqnarray*}
\frac{\partial B}{\partial \epsilon}(0,0)
\!\!\!&=&\!\!\!
\frac{4\frac{\partial \mathcal{D}}{\partial \epsilon}(\alpha,\epsilon)-2\mathcal{T}(\alpha,\epsilon)\frac{\partial \mathcal{T}}{\partial \epsilon}(\alpha,\epsilon)}
{4\sqrt{4\mathcal{D}(\alpha,\epsilon)-(\mathcal{T}(\alpha,\epsilon))^{2}}}|_{(\alpha,\epsilon)=(0,0)}\\
\!\!\!&=&\!\!\!
-\frac{\mu_{0}}{2}(d_{11}+d_{22}).
\end{eqnarray*}
Now we compute $\frac{\partial T}{\partial \epsilon}(0,0)$.
When $1\leq k_{1}<k$, by Lemma \ref{lm-Perd-Hopf} and \eqref{cond-ImCk} we have
\begin{eqnarray*}
&&-\frac{2\pi}{T^{2}(\alpha,0)}\frac{\partial T}{\partial \epsilon}(\alpha,0)\\
&&=
\frac{\partial B}{\partial \epsilon}(\alpha,0)
+\frac{k_{1}}{k}{\rm Im}(C_{k_{1}}(0,0))(R_{0}(\alpha,0))^{\frac{k_{1}}{k}-1}\frac{\partial R_{0}}{\partial \epsilon}(\alpha,0)+O(|\alpha|)\\
&&=
\left\{-\frac{k_{1}{\rm Im}(C_{k_{1}}(0,0))}{2k{\rm Re}(C_{k}(0,0))}
\left(d_{22}J_{11}(0)+d_{11}J_{22}(0)-d_{12}J_{21}(0)-d_{21}J_{12}(0)\right)+O(\alpha)\right\}\\
&&\ \ \ \times (R_{0}(\alpha,0))^{\frac{k_{1}}{k}-1}
 -\frac{\mu_{0}}{2}(d_{11}+d_{22})+O(\alpha),
\end{eqnarray*}
where $R_{0}(\alpha,0)$ is defined as in \eqref{r0-expansion}.
Note that $R_{0}(\alpha,0)\to 0^{+}$ as $\alpha \to 0$ and $k_{1}<k$.
Then
under the conditions that ${\rm Re}(C_{k})<0$ and ${\bf (C1)}$,
we have that $\frac{\partial T}{\partial \epsilon}(\alpha,0)<0$ for sufficiently small $|\alpha|$.

When $k_{1}=k$, by Lemma \ref{lm-Perd-Hopf} and \eqref{cond-ImCk} we have
\begin{eqnarray*}
&&\lefteqn{-\frac{2\pi}{T^{2}(0,0)}\frac{\partial T}{\partial \epsilon}(0,0)}\\
&&=
\frac{\partial B}{\partial \epsilon}(0,0)+{\rm Im}(C_{k}(0,0))\frac{\partial R_{0}}{\partial \epsilon}(0,0)\\
&&=
-\frac{\mu_{0}}{2}(d_{11}+d_{22})-\frac{{\rm Im}(C_{k}(0,0))}{2{\rm Re}(C_{k}(0,0))}
\left(d_{22}J_{11}(0)+d_{11}J_{22}(0)-d_{12}J_{21}(0)-d_{21}J_{12}(0)\right).
\end{eqnarray*}
Then under the condition {\bf (C2)},
we have that $\frac{\partial T}{\partial \epsilon}(\alpha,0)<0$ for sufficiently small $|\alpha|$.
Therefore, the proof is finished by Theorem \ref{lm-unstab-patch}.
\end{proof}

\begin{remark}
Yi \cite{Yi-21} recently applied the results in  \cite{Hassard-81} to give the period function
for small-amplitude periodic solutions bifurcating from a weak focus of order one,
whose  first Lyapunov coefficient has nonzero real part.
Following that, Yi \cite{Yi-21} gave a criterion for the destabilization of
the synchronous periodic solutions
arising from a weak focus of order one.
Here we consider the destabilization of the synchronous periodic solutions
arising from a higher-order weak focus.
This phenomenon is called degenerate Hopf bifurcation  \cite{Far-etal-89}.
It is worth mentioning that \cite{Chen-Huang-22,Gasull-05,Hassard-81} considered
the period function of perturbed periodic solutions
appearing in one-parameter Hopf bifurcation.
However, there are two parameters involved in determining the related period function for a higher-order weak focus,
the results in \cite{Chen-Huang-22,Gasull-05,Hassard-81} are not applicable to this case.
\end{remark}

\subsection{Application to the two-patch Holling-Tanner model}
\label{sec:4.3}

In this section,
we consider the two-patch Holling-Tanner model as illustration for our results.
A similar argument can be also applied to explore the destabilization of synchronous periodic solutions
for various patch models with two-dimensional kinetic systems.
It is worth mentioning that Theorem \ref{lm-unstab-patch} are applicable to
patch models not only with two-dimensional kinetic systems  but also with high-dimensional kinetic systems,
e.g. epidemic models \cite{Lu-Gao-Huang-Wang-23,Wang-Mulone-03},
ecological systems \cite{Arumugam-2021,Jiang-Lam-Lou-21},
chemical reaction models \cite{Dolnik-Epstein-93,Moore-Horsthemke-05}, etc.

Consider a two-dimensional kinetic system
\begin{eqnarray} \label{PP-model}
\begin{aligned}
\frac{d u}{d t} &= \dot u = ru\left(1-\frac{u}{K}\right)-q(u)v=:P(u,v),
\\
\frac{d v}{d t} &=  \dot v = sv\left(1-h\frac{v}{u}\right)=:Q(u,v),
\end{aligned}
\end{eqnarray}
where $u$ and $v$ are the population densities of a prey and a predator, respectively.
Here $r$ and $s$  indicate the growth rates of the prey and the predator respectively,
$K$ measures the prey environmental carrying capacity in the absence of predation,
and $h$ presents a measure of food quality.
The functional response $q(u)$ is of Holling type II  and has the form
\[
q(u)=\frac{mu}{u+a},
\]
where we require $a>0$ and $m>0$.
This is the classical Holling-Tanner model,
which exhibits interesting oscillatory behaviors (see, for instance, \cite{Gasull-Kooij-Torregrosa-97,Hsu-Huang-95}).

To study spatial aspects,
we consider a two-patch model with the kinetic system \eqref{PP-model} on each patch
and cross-diffusion-like couplings between the two patches.
The dynamics are governed by
\begin{equation} \label{2d-pp-patch-model}
\begin{aligned}[cll]
&\frac{d u_{j}}{dt} =\delta\sum_{i\in\Omega}\left(d_{11}(u_{i}-u_{j})+d_{12} (v_{i}-v_{j})\right)+P(u_{j},v_{j}),\  \ \ \  \
   &&\ \ \ \ j\in \Omega:=\{1,2\},\\
&\frac{d v_{j}}{dt} =\delta \sum_{i\in\Omega}(d_{21}(u_{i}-u_{j})+d_{22} (v_{i}-v_{j}))+Q(u_{j},v_{j}),\  \ \ \  \
   &&\ \ \ \ j\in \Omega:=\{1,2\},
\end{aligned}
\end{equation}
where $d_{ij}$  are the diffusion coefficients and $\delta$ is the coupling strength.
Then the linearization of patch model \eqref{2d-pp-patch-model} about a synchronous periodic solution
$\mathcal{U}_{0}(t)=(\phi(t),\phi(t))$ is
\begin{equation} \label{eq:example-linear}
\begin{aligned}[cll]
&\frac{d X_{j}}{dt} =\delta\sum_{i\in\Omega}\left(d_{11}(X_{i}-X_{j})+d_{12} (Y_{i}-Y_{j})\right)+P_{u}(\phi(t))X_{j}+P_{v}(\phi(t))Y_{j},
   &&\ \  j\in \Omega:=\{1,2\},\\
&\frac{d Y_{j}}{dt} =\delta \sum_{i\in\Omega}(d_{21}(X_{i}-X_{j})+d_{22} (Y_{i}-Y_{j}))+Q_{u}(\phi(t))X_{j}+Q_{v}(\phi(t))Y_{j},
   &&\ \  j\in \Omega:=\{1,2\},
\end{aligned}
\end{equation}

In the following,
we use two concrete examples to  demonstrate the destabilization of synchronous periodic solutions for patch model \eqref{2d-pp-patch-model}.
Examples 1 and 2 illustrate the cases of large- and small-amplitude bifurcating periodic solutions, respectively.
Here we shall use Lyapunov exponents (see \cite{Guck-Holmes-83}) of the linearized systems
to describe the destabilization. If the linearized systems have a positive Lyapunov exponent,
then the corresponding synchronous periodic solutions become unstable.

\vskip 0.3cm

{\bf Example 1.}
Consider the Holling-Tanner model  \eqref{PP-model} and  fix the parameters as follows:
\begin{eqnarray*}
a=1,\ \ \ h=0.5,\ \ \ K=5,\ \ \ m=1,\ \ \ r=1,\ \ \ s=0.1.
\end{eqnarray*}
Numerical simulation   with MATLAB shows that the kinetic system \eqref{PP-model} has a stable periodic solution $\varphi_{1}(t)$ for $t\in\mathbb{R}$
that surrounds a unstable focus. See Figure \ref{fg-periodic-smu}.
\begin{figure}[!h]
  \centering
  \includegraphics[width=3in,height=2.4in]{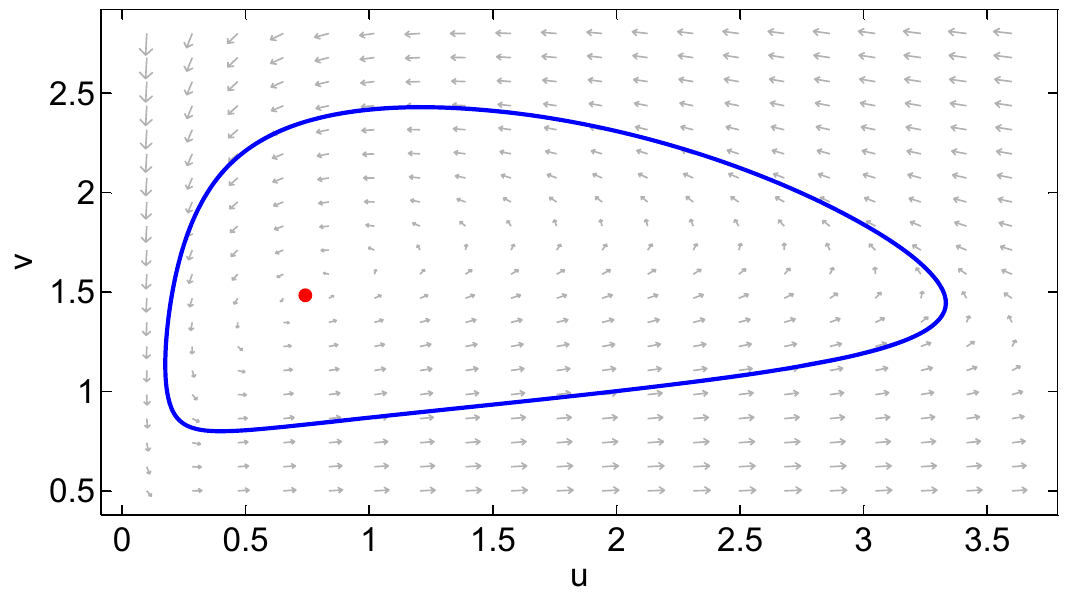}
  \caption{
  A stable periodic solution (blue cycle) surrounds a unstable focus (red dot) in model \eqref{PP-model},
  where $a=1$, $h=0.5$,  $K=5$, $m=1$, $r=1$ and $s=0.1$.
  }
  \label{fg-periodic-smu}
\end{figure}

Set $\mathcal{U}_{1}(t):=(\varphi_{1}(t),\varphi_{1}(t))$.
When $\delta=0$,
patch model \eqref{2d-pp-patch-model} is decoupled and the synchronous periodic solution $\mathcal{U}_{1}$ is stable
with respect to patch model \eqref{2d-pp-patch-model}.
 With the aid of MATLAB,
we obtain the results as follows:
\begin{enumerate}
\item[(i)] Set $\delta=0.01$, $d_{11}=d_{22}=1$ and $d_{12}=d_{21}=0$.
Numerical simulation shows that the largest Lyapunov exponent is zero. See Figure \ref{fig-1-2}.
This coincides with Proposition \ref{thm-TStab-patch},
which shows that the synchronous periodic solution $\mathcal{U}_{1}$ is still stable with respect
to patch model \eqref{2d-pp-patch-model} with the identical diffusion rates.

\item[(ii)] Set $\delta=0.1$,  $d_{11}=d_{21}=d_{22}=1$ and $d_{12}=10$.
Numerical simulation shows that the largest Lyapunov exponent is about  0.0031. See Figure \ref{fig-1-3}.
This implies the destabilization of the synchronous periodic solution $\mathcal{U}_{1}$ can be induced by cross-diffusion-like couplings.
\end{enumerate}

\begin{figure}[!h]
\centering
\subfigure[]{
\begin{minipage}[t]{1\linewidth}
\centering
\includegraphics[width=3.5in,height=2.6in]{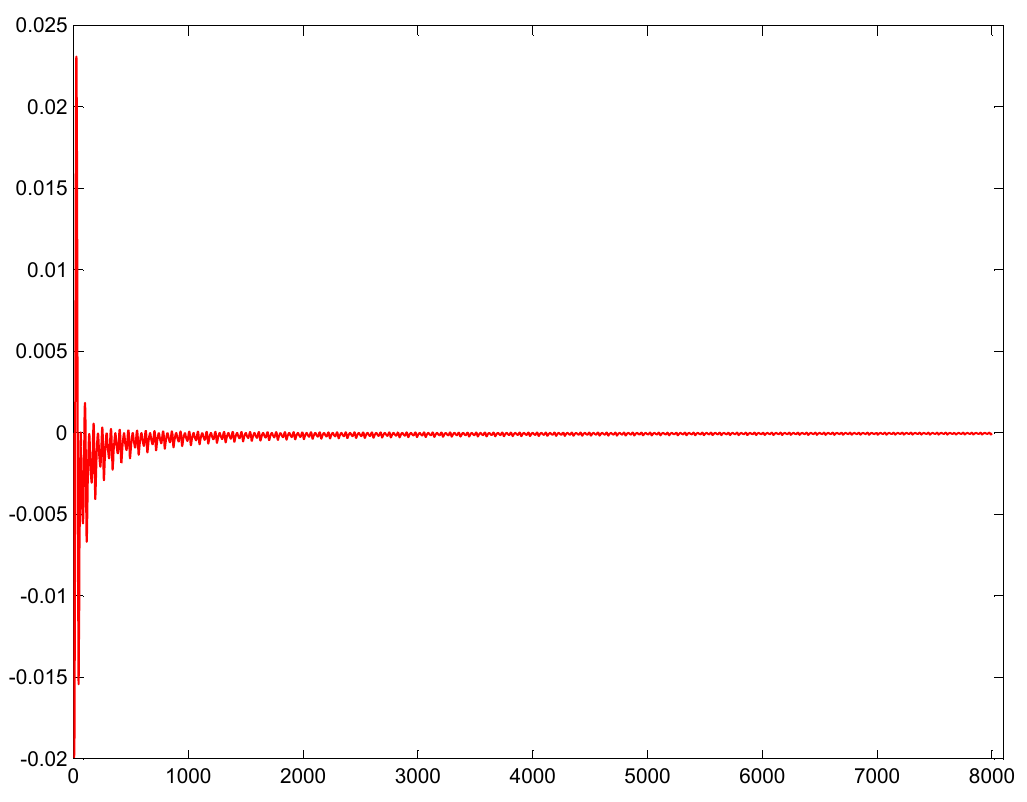}
\label{fig-1-2}
\end{minipage}
}%
\\
\subfigure[]{
\begin{minipage}[t]{1\linewidth}
\centering
\includegraphics[width=3.5in,height=2.6in]{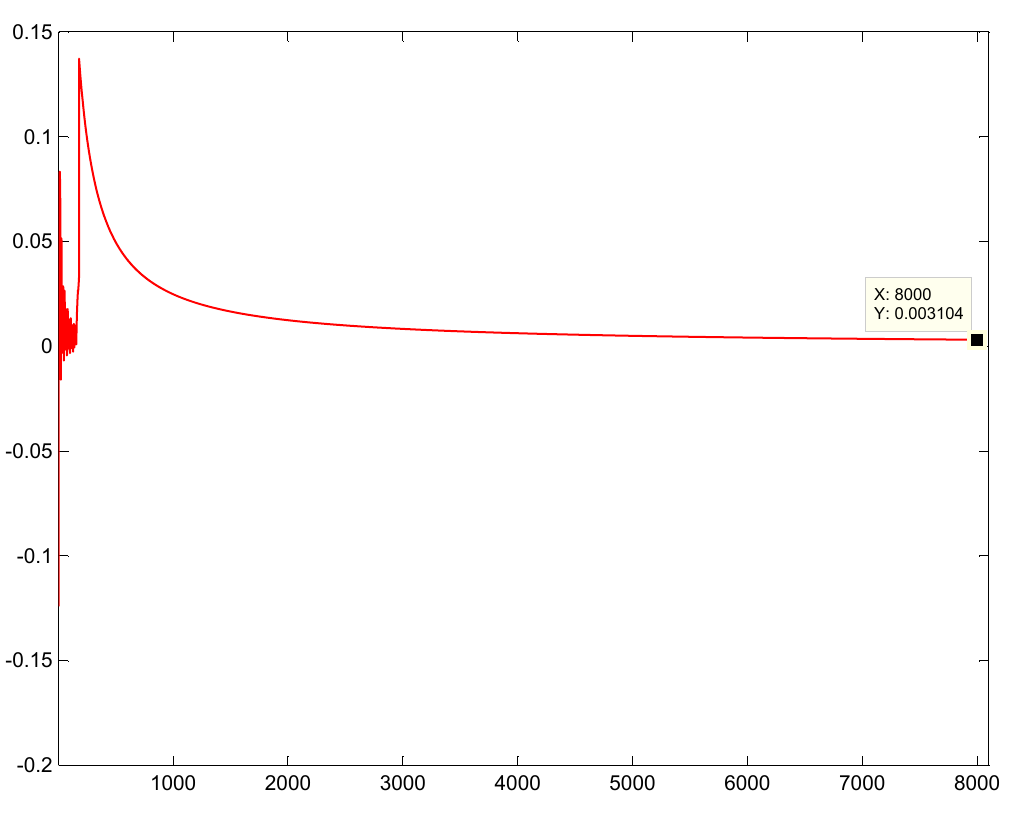}
\label{fig-1-3}
\end{minipage}
}%
\centering
\caption{
The largest Lyapunov exponents of the linearized system \eqref{eq:example-linear}.
Fix $a=1$, $h=0.5$,  $K=5$, $m=1$, $r=1$ and $s=0.1$.
(a)  The largest Lyapunov exponent is zero when $\delta=0.01$, $d_{11}=d_{22}=1$ and $d_{12}=d_{21}=0$.
(b) The largest Lyapunov exponent is about 0.0031 when $\delta=0.01$,  $d_{11}=d_{21}=d_{22}=1$ and $d_{12}=10$.
}
\label{Fg-no-cross}
\end{figure}

{\bf Example 2.}
Consider the Holling-Tanner model  \eqref{PP-model} and fix $K=m=r=1$.
By a direct computation,
the positive equilibria of  model  \eqref{PP-model} are determined by the roots of equation
\begin{eqnarray}\label{eq:exmp-2-1}
\beta u^2 + (a\beta -\beta + s)u - a\beta=0, \ \ \ \beta:=hs.
\end{eqnarray}
Clearly, the above equation has a unique positive root that is denoted by $u_{*}$.
So $E_{*}:=(u_{*},u_{*}/h)$ is the unique positive equilibrium of  model  \eqref{PP-model}.
Furthermore, if the following equations hold:
\begin{eqnarray}
2u_{*}^{2}+(a+s-1)u_{*}+as\!\!\!&=&\!\!\! 0,\label{eq:exmp-2-2}\\
-(3+a)u_{*}^{3}-6au_{*}(a+u_{*})+a^{2}(1-a)\!\!\!&=&\!\!\! 0, \label{eq:exmp-2-3}
\end{eqnarray}
then by \cite[Theorem 3.2]{Gasull-Kooij-Torregrosa-97} and statements in \cite[p.161]{Gasull-Kooij-Torregrosa-97},
this equilibrium $(u_{*},u_{*}/h)$ is a weak focus of multiplicity two and asymptotically stable.
Consequently, the corresponding first- and second-order Lyapunov coefficients $C_{1}$ and $C_{2}$ satisfy
\[
{\rm Re}(C_{1})=0,\ \ \ {\rm Re}(C_{2})<0.
\]

Set $s=0.1$ and let $u$ in \eqref{eq:exmp-2-1} be replaced by $u_{*}$.
Solving  \eqref{eq:exmp-2-1}, \eqref{eq:exmp-2-2} and \eqref{eq:exmp-2-3} yields a unique positive solution
$a\approx0.336238$, $h\approx 0.222132$ and $u_{*}\approx 0.085693$.
Now we fix these $a$ and $h$, i.e., $a\approx0.336238$ and $h\approx 0.222132$, and vary $s$.
At this equilibrium $E_{*}$,
we can obtain the Jacobian matrix $J(s)=(J_{ij}(s))$ of model \eqref{PP-model} satisfies
\begin{eqnarray*}
J_{11}(0.1)\approx0.1,\ \ \ J_{12}(0.1)\approx-0.203097, \ \ \ J_{21}(0.1)\approx 0.450183, \ \ \ J_{22}(0.1)\approx -0.1,
\end{eqnarray*}
and has a pair of purely imaginary eigenvalue $\lambda_{\pm}\approx \pm  0.285361 \sqrt{-1}$.
By the formula of the first-order Lyapunov coefficient $C_{1}$ in page 90 of \cite{Hassard-81}
(see also Appendix A in \cite{Yi-21}),
we can compute that the imaginary part ${\rm Im}(C_{1})$ of the first-order Lyapunov coefficient $C_{1}$ is
\begin{eqnarray*}
{\rm Im}(C_{1})\approx -1.872272.
\end{eqnarray*}
Note that the trace $\mathcal{T}(s)$ of $J(s)$ is $(0.1-s)$.
Then by Proposition \ref{thm-Hopf-exist},
there exists a sufficiently small $s_{0}>0$ such that for $0<0.1-s<s_{0}$,
model \eqref{PP-model} has a stable periodic solution $\varphi_{2}(t)$ arising from $E_{*}$.
See Figure \ref{fg-Hopf-smu}.
\begin{figure}[!htbp]
  \centering
  \includegraphics[width=3in,height=2.4in]{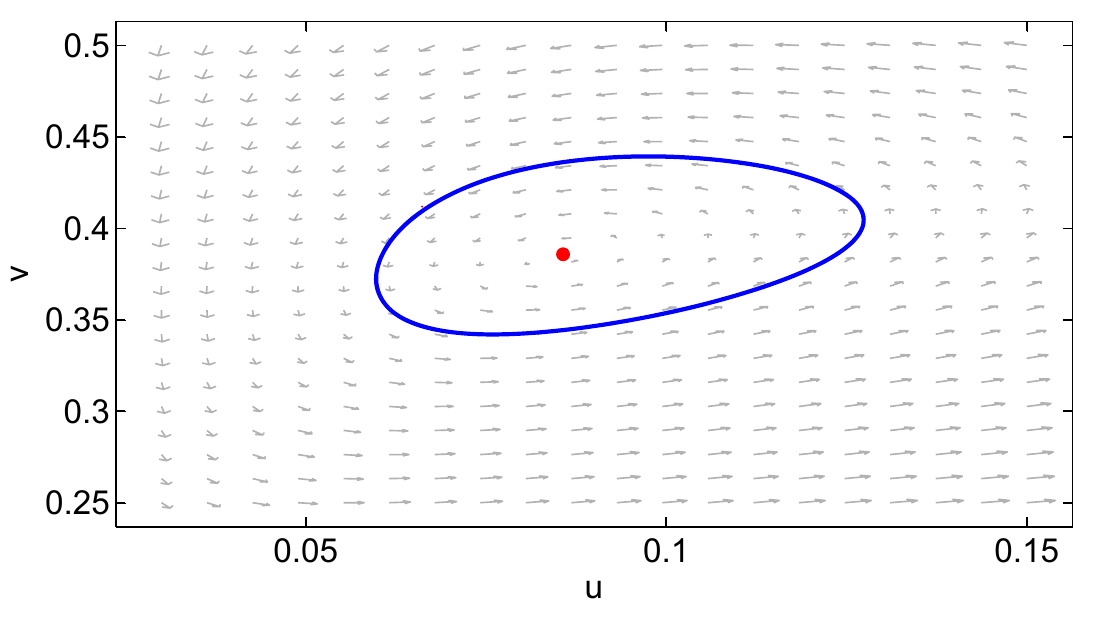}
  \caption{A stable periodic solution (blue cycle) bifurcates from a weak focus $E_{*}$ (red dot) in model \eqref{PP-model},
   where $a=0.3362380612$, $h=0.2221316654$,  $K=1$, $m=1$, $r=1$ and $s=0.09999$.
  }
  \label{fg-Hopf-smu}
\end{figure}

Set $\mathcal{U}_{2}(t):=(\varphi_{2}(t),\varphi_{2}(t))$.
When $\delta=0$,
patch model \eqref{2d-pp-patch-model} is decoupled and the synchronous periodic solution $\mathcal{U}_{2}$ is stable
with respect to patch model \eqref{2d-pp-patch-model}.
If the diffusion rates $d_{ij}$ satisfy
\begin{eqnarray}\label{examp-cond}
d_{22}J_{11}(0.1)+d_{11}J_{22}(0.1)-d_{12}J_{21}(0.1)-d_{21}J_{12}(0.1)<0,
\end{eqnarray}
then by Proposition \ref{thm-Hopf-Destab},
the synchronous periodic solution $\mathcal{U}_{2}$ becomes unstable.
 With the aid of MATLAB,
we obtain the results as follows:
\begin{enumerate}
\item[(i)] Set $\delta=0.01$,  $d_{11}=1$, $d_{12}=1$,  $d_{21}=-100$ and  $d_{22}=5$.
Numerical simulation shows that the largest Lyapunov exponent is zero. See Figure \ref{fig-2-1}.
Then the synchronous periodic solution $\mathcal{U}_{2}$ is still stable.

\item[(ii)] Set $\delta=0.01$,  $d_{11}=1$, $d_{12}=1$,  $d_{21}=100$ and  $d_{22}=5$.
Numerical simulation shows that the largest Lyapunov exponent is about   0.0079. See Figure \ref{fig-2-2}.
Then the synchronous periodic solution $\mathcal{U}_{2}$ becomes unstable.
This coincides with Proposition \ref{thm-Hopf-Destab},
which shows that stable synchronous periodic solution $\mathcal{U}_{1}$ becomes unstable with respect
to patch model \eqref{2d-pp-patch-model} if the condition \eqref{examp-cond} holds.
\end{enumerate}

\begin{figure}[!htbp]
\centering
\subfigure[]{
\begin{minipage}[t]{1\linewidth}
\centering
\includegraphics[width=3.5in,height=2.6in]{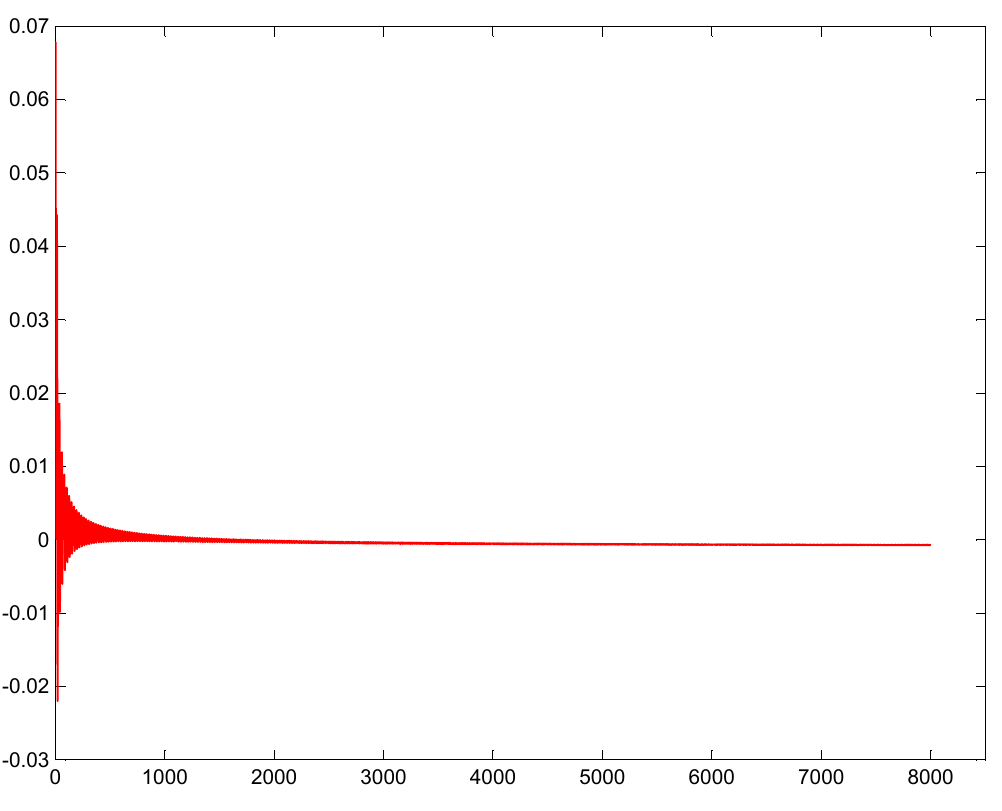}
\label{fig-2-1}
\end{minipage}
}%
\\
\subfigure[]{
\begin{minipage}[t]{1\linewidth}
\centering
\includegraphics[width=3.5in,height=2.6in]{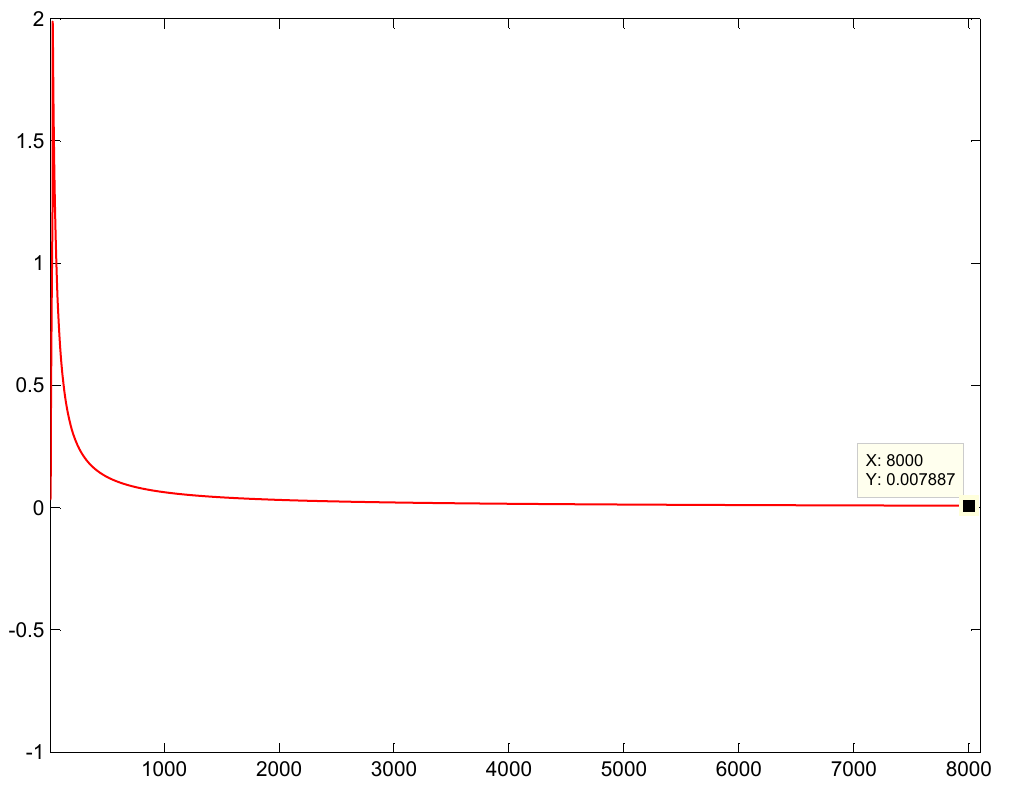}
\label{fig-2-2}
\end{minipage}
}%
\centering
\caption{
The largest Lyapunov exponents of the linearized system \eqref{eq:example-linear}.
Fix $a=0.3362380612$, $h=0.2221316654$,  $K=1$, $m=1$, $r=1$, $s=0.09999$,
$\delta=0.01$,  $d_{11}=1$, $d_{12}=1$ and  $d_{22}=5$.
(a)  The largest Lyapunov exponent for the linearized system \eqref{eq:example-linear}  is zero
when $d_{21}=-100$.
(b) The largest Lyapunov exponent for the linearized system \eqref{eq:example-linear}  is about  0.0079,
when $d_{21}=100$.
}
\label{Fg-Hopf-cross}
\end{figure}

\section*{Appendix A. perturbation of eigenvalues for matrices}
\label{sec:pert-eigenvalues}

In this appendix, we present the perturbation theory for matrices developed in our recent work \cite{Chen-Huang-23}.
For each $\zeta=(\zeta_{1},...,\zeta_{N})^{T}$ in $\mathbb{C}^{N}$ or $\mathbb{R}^{N}$,
set
\begin{eqnarray*}
|\zeta|:=\max\{|\zeta_{1}|,|\zeta_{2}|,...,|\zeta_{N}|\}.
\end{eqnarray*}
 Let $\|\cdot\|$ denote the norm of a matrix, i.e., the maximum row sum of the absolute values of the entries.

In order to give  our criterion for  the destabilization of synchronous periodic solutions,
it is necessary to give the asymptotic expressions of the Floquet spectra.
The argument is based on two perturbation results which were proved in our recent work \cite{Chen-Huang-23}.

Consider a matrix $A \in\mathbb{R}^{N\times N}$ ($N\geq 2$) as follows:
\begin{eqnarray*}
A=\left(
\begin{array}{cc}
A_{1} & 0\\
0 & A_{2}
\end{array}
\right),
\end{eqnarray*}
where $A_{1}={\rm diag}(a_{1},a_{2},...,a_{N_{1}})\in\mathbb{R}^{N_{1}\times N_{1}}$
and $A_{2}\in\mathbb{R}^{N_{2}\times N_{2}}$ for integers $N_{1}$ and $N_{2}$ with $0<N_{1}=N-N_{2}<N$.
Additionally, the spectra $\sigma(A_{1})$ and $\sigma(A_{2})$ of $A_{1}$ and $A_{2}$
satisfy the following assumption:
\begin{itemize}
  \item
The spectra $\sigma(A_{1})$ and $\sigma(A_{2})$  are separated by
a simple closed positively oriented cycle $\Gamma$ in the complex plane,
and $\sigma(A_{1})$ lies in the interior of the closed cycle $\Gamma$.
\end{itemize}

For a sufficiently small constant $\delta_{0}>0$,
let $B:(-\delta_{0},\delta_{0}) \to \mathbb{R}^{N\times N}$ denote a matrix function
that is analytic in $\delta\in (-\delta_{0},\delta_{0})$
and satisfies that $\|B(\delta)\|=O(|\delta|)$ for sufficiently small $|\delta|$.
Consider a perturbation of $A$ as follows:
\renewcommand\theequation{A.1}
\begin{eqnarray}\label{df-A-delta-0}
A(\delta):=A+B(\delta),\ \ \ \ \ \delta\in(-\delta_{0},\delta_{0}).
\end{eqnarray}
By the classical perturbation theory of eigenvalues for matrices (see, for instance, \cite[pp. 63-64]{Kato-80}),
the eigenvalues of $A(\delta)$ are continuous in $\delta$ .
By continuity,
we can choose sufficiently small $\delta_{0}>0$ such that
all eigenvalues of $A(\delta)$ perturbed from $\sigma(A_{1})$ lie in the interior of the closed cycle $\Gamma$
and all others lie outside the domain surrounded by $\Gamma$ in the complex plane.
Following that, we can define a family of parametric projections $\mathcal{P}(\delta)$ by
\[
\mathcal{P}(\delta)=\frac{1}{2\pi{\bf i}} \oint_{\Gamma}(\lambda I_{N}-A(\delta))\, d\lambda,
\ \ \ \ \ \delta\in (-\delta_{0},\delta_{0}).
\]
Concerning these parametric projections $\mathcal{P}(\delta)$, we have the next lemma.

\setcounter{theorem}{0}
\renewcommand{\thetheorem}{A.\arabic{theorem}}
\begin{lemma}\label{lm-app-0} \cite[Lemma A.1]{Chen-Huang-23}
There exists a  sufficiently small constant $\hat{\delta}_{0}$ with $0<\hat{\delta}_{0}\leq \delta_{0}$
such that the following statements hold:
\begin{enumerate}
\item[{\bf (i)}]
$\mathcal{P}(\delta)$ is analytic in the interval $(-\hat{\delta}_{0},\hat{\delta}_{0})$.

\item[{\bf (ii)}]
Let the ranges of $\mathcal{P}(0)$ and $I-\mathcal{P}(0)$ be spanned by
$\{\xi_{j}: j=1,...,N_{1}\}$ and $\{\xi_{j}: j=N_{1}+1,...,N\}$, respectively.
Then there exists an operator-valued function $U(\cdot):(-\hat{\delta}_{0},\hat{\delta}_{0})\to\mathbb{R}^{N\times N}$
which is analytic and invertible,
such that for each $\delta\in (-\hat{\delta}_{0},\hat{\delta}_{0})$, the sets
$\{U(\delta)\xi_{j}: j=1,...,N_{1}\}$ and $\{U(\delta)\xi_{j}: j=N_{1}+1,...,N\}$
form the bases of the ranges of $\mathcal{P}(\delta)$ and $I-\mathcal{P}(\delta)$, respectively.
\end{enumerate}
\end{lemma}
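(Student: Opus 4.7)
The plan is to prove the two statements in sequence, with (i) serving as the analytic foundation and (ii) being constructed via an explicit intertwiner operator that is standard in perturbation theory (the so-called Sz.-Nagy--Kato construction).

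For part (i), the key is to secure the contour $\Gamma$ for all sufficiently small $|\delta|$. First I would invoke the fact that the eigenvalues of $A(\delta)$ depend continuously on $\delta$ (this is classical and is recalled in the excerpt). Since $\Gamma$ separates $\sigma(A_1)$ from $\sigma(A_2)$ and is a compact set disjoint from $\sigma(A)$, continuity lets me shrink $\delta_0$ to a new $\hat{\delta}_0>0$ so that no eigenvalue of $A(\delta)$ crosses $\Gamma$ for $|\delta|<\hat{\delta}_0$. Consequently the resolvent $(\lambda I_N - A(\delta))^{-1}$ is defined and jointly analytic in $(\lambda,\delta)$ on $\Gamma\times(-\hat{\delta}_0,\hat{\delta}_0)$ (here I would use $B(\delta)$ analytic in $\delta$, together with the Neumann series for the perturbed resolvent). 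Differentiation under the integral sign then shows that $\mathcal{P}(\delta)$ is analytic in $\delta$. A small amount of bookkeeping also shows $\mathrm{rank}\,\mathcal{P}(\delta)=\mathrm{rank}\,\mathcal{P}(0)=N_1$, by continuity of the projection.

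For part (ii), I would introduce the explicit candidate
\begin{equation*}
U(\delta):=\mathcal{P}(\delta)\mathcal{P}(0)+(I-\mathcal{P}(\delta))(I-\mathcal{P}(0)),
\end{equation*}
which is patently analytic in $\delta$ by part (i). The crucial algebraic identity to verify is the intertwining relation
\begin{equation*}
\mathcal{P}(\delta)\,U(\delta)=U(\delta)\,\mathcal{P}(0),
\end{equation*}
which follows by expanding both sides using $\mathcal{P}(\delta)^2=\mathcal{P}(\delta)$ and $\mathcal{P}(0)^2=\mathcal{P}(0)$; both sides collapse to $\mathcal{P}(\delta)\mathcal{P}(0)$. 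Since $U(0)=\mathcal{P}(0)+(I-\mathcal{P}(0))=I$, continuity of the determinant guarantees $U(\delta)$ is invertible on some smaller interval, which I absorb into $\hat{\delta}_0$. Then for $j\leq N_1$ I have $\mathcal{P}(0)\xi_j=\xi_j$, whence $\mathcal{P}(\delta)U(\delta)\xi_j=U(\delta)\xi_j$, so $U(\delta)\xi_j\in\mathrm{ran}\,\mathcal{P}(\delta)$; and for $j>N_1$ I have $\mathcal{P}(0)\xi_j=0$, whence $\mathcal{P}(\delta)U(\delta)\xi_j=0$, so $U(\delta)\xi_j\in\mathrm{ran}(I-\mathcal{P}(\delta))$. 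Combined with invertibility of $U(\delta)$ and the rank equalities from part (i), the two image sets have the right cardinalities to be bases of the respective ranges.

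The main obstacle I expect is really just the bookkeeping around the size of $\hat{\delta}_0$: one needs $\hat{\delta}_0$ simultaneously small enough that (a) no eigenvalue of $A(\delta)$ crosses $\Gamma$, (b) the resolvent Neumann series on $\Gamma$ converges uniformly, and (c) $\det U(\delta)\neq 0$. Each of these is an open condition satisfied at $\delta=0$, so a single $\hat{\delta}_0$ works; the argument is routine once (i) is in hand. Apart from that, the proof is essentially the classical Riesz projection / Kato intertwiner argument specialized to the finite-dimensional analytic setting, so no deep new ingredient is needed beyond what is already quoted from \cite{Kato-80}.
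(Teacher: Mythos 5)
Your proof is correct and is essentially the classical Riesz-projection/Kato intertwiner argument that the paper itself relies on: it states this lemma without proof, citing \cite[Lemma A.1]{Chen-Huang-23}, whose proof follows the same construction from \cite{Kato-80} (contour stability of the eigenvalues, analyticity of the resolvent integral, and the transformation operator $U(\delta)=\mathcal{P}(\delta)\mathcal{P}(0)+(I-\mathcal{P}(\delta))(I-\mathcal{P}(0))$ with the intertwining relation $\mathcal{P}(\delta)U(\delta)=U(\delta)\mathcal{P}(0)$ and invertibility near $\delta=0$). One incidental remark: the paper's displayed definition of the projection omits the inverse and should read $\mathcal{P}(\delta)=\frac{1}{2\pi{\bf i}}\oint_{\Gamma}(\lambda I_{N}-A(\delta))^{-1}\,d\lambda$; your argument correctly works with the resolvent, as it must.
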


We continue to study a special case of the perturbation \eqref{df-A-delta-0}, i.e.,
 the perturbation $A(\delta)$ is in the form
\renewcommand\theequation{A.2}
\begin{eqnarray}\label{df-A-delta}
A(\delta)=\left(
\begin{array}{cc}
A_{1}(\delta) & 0\\
0 & A_{2}(\delta)
\end{array}
\right)+A_{3}(\delta),
\ \ \ \ \ \delta\in(-\delta_{0},\delta_{0}),
\end{eqnarray}
where $A_{1}(\delta)\in\mathbb{R}^{N_{1}\times N_{1}}$,
$A_{2}(\delta)\in\mathbb{R}^{N_{2}\times N_{2}}$ and $A_{3}(\delta)\in\mathbb{R}^{N\times N}$
are continuous and satisfy
\begin{eqnarray*}
A_{1}(\delta)=A_{1}+O(|\delta|),
\ \ \
A_{2}(\delta)=A_{2}+O(|\delta|), \ \ A_{3}(\delta)=O(|\delta|^{2})
\end{eqnarray*}
for sufficiently small $|\delta|$.
Then we have the following result.

\renewcommand{\thetheorem}{A.\arabic{theorem}}
\begin{lemma}\cite[Lemma A.2]{Chen-Huang-23} \label{lm-app}
Consider the perturbation $A(\delta)$ of the form \eqref{df-A-delta}.
Suppose that
\begin{eqnarray*}
A_{1}(\delta)=\left(
\begin{array}{cccc}
a_{1}+b_{1}\delta & 0 & \cdot\cdot\cdot & 0\\
0 & a_{2}+b_{2}\delta  & \cdot\cdot\cdot & 0\\
\cdot\cdot\cdot & \cdot\cdot\cdot & \cdot\cdot\cdot & \cdot\cdot\cdot \\
0 & 0 & \cdot\cdot\cdot & a_{N_{1}}+b_{N_{1}}\delta
\end{array}
\right)
,\ \ \ \delta\in(-\delta_{0},\delta_{0}),
\end{eqnarray*}
where  $b_{j}$ ($j=1,2,...,N_{1}$) are real constants.
Then for each $j=1,2,...,N_{1}$, the eigenvalue $\lambda_{j}(\delta)$ perturbed from $\lambda_{j}=a_{j}$
has the asymptotic expression as follows:
\begin{eqnarray*}
\lambda_{j}(\delta)=a_{j}+b_{j}\delta+O(\delta^{2})
\end{eqnarray*}
for sufficiently small $|\delta|\geq 0$.
\end{lemma}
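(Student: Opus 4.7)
The strategy is to apply Lemma A.1 to produce an analytic similarity that block-diagonalizes $A(\delta)$ modulo $O(\delta^2)$, and then read off the eigenvalues of the resulting $N_1\times N_1$ block.

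I would first establish the sharp estimate $\mathcal{P}(\delta) = \mathcal{P}(0) + O(\delta^2)$. Writing $A(\delta) = D(\delta) + A_3(\delta)$ with $D(\delta) := {\rm diag}(A_1(\delta), A_2(\delta))$ block-diagonal, the second resolvent identity
\begin{equation*}
(\lambda I_N - A(\delta))^{-1} = (\lambda I_N - D(\delta))^{-1} + (\lambda I_N - D(\delta))^{-1} A_3(\delta)\,(\lambda I_N - A(\delta))^{-1}
\end{equation*}
integrates along $\Gamma$ to give $\mathcal{P}(\delta) = \mathcal{P}_D(\delta) + O(\|A_3(\delta)\|) = {\rm diag}(I_{N_1}, 0) + O(\delta^2) = \mathcal{P}(0) + O(\delta^2)$, since $D(\delta)$ is block-diagonal and $\|A_3(\delta)\| = O(\delta^2)$. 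The standard transformation-function construction $Q(\delta) = \mathcal{P}(\delta)\mathcal{P}(0) + (I_N - \mathcal{P}(\delta))(I_N - \mathcal{P}(0))$ satisfies $Q(0) = I_N$ and $Q(\delta) = I_N + O(\delta^2)$, so the $U(\delta)$ produced by Lemma A.1 can be chosen as $U(\delta) = Q(\delta)$. Because $\mathcal{P}(\delta)$ commutes with $A(\delta)$, the similarity $U(\delta)^{-1} A(\delta) U(\delta)$ is block-diagonal; combining this with $U(\delta) = I_N + O(\delta^2)$ yields
\begin{equation*}
U(\delta)^{-1} A(\delta) U(\delta) = A(\delta) + O(\delta^2) = D(\delta) + O(\delta^2).
\end{equation*}
In particular, its upper-left $N_1 \times N_1$ block equals $\tilde A_1(\delta) = {\rm diag}(a_1 + b_1 \delta, \ldots, a_{N_1} + b_{N_1}\delta) + R(\delta)$ with $R(\delta) = O(\delta^2)$, and its eigenvalues are exactly the $\lambda_j(\delta)$.

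The final step is to extract the expansion of each eigenvalue of $\tilde A_1(\delta)$. When all $a_j$ are distinct this is standard analytic perturbation theory: the unperturbed eigenvector for $a_j$ is $\xi_j$ and the first-order correction equals $\xi_j^{T} A_1'(0) \xi_j = b_j$. The delicate case, which I expect to be the main obstacle, is when several $a_j$ coincide. I would split into subcases: if $a_j = a_i$ but $b_j \neq b_i$, the unperturbed eigenvalues $a_j + b_j\delta$ and $a_i + b_i\delta$ are separated by $|b_j - b_i|\delta$, which for small $\delta$ dominates the perturbation size $\|R(\delta)\| = O(\delta^2)$, so classical perturbation theory still yields $\lambda_j(\delta) = a_j + b_j\delta + O(\delta^2)$; if in addition $b_j = b_i$, the two unperturbed branches coincide identically and any perturbed eigenvalue lies within $O(\|R(\delta)\|) = O(\delta^2)$ of this common value, again matching the claim.
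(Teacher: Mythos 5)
Your proposal is correct, but note that this paper contains no proof of the statement to compare against: Lemma A.2 is imported verbatim from \cite{Chen-Huang-23}, so your argument must be judged on its own, and it stands. The two pillars both hold: the second-resolvent-identity computation gives $\mathcal{P}(\delta)=\mathcal{P}_{D}(\delta)+O(\delta^{2})$, and $\mathcal{P}_{D}(\delta)=\mathrm{diag}(I_{N_{1}},0)$ \emph{exactly} for small $\delta$, because $D(\delta)$ is block diagonal with $\sigma(A_{1}(\delta))$ inside and $\sigma(A_{2}(\delta))$ outside $\Gamma$; then $Q(\delta)=I_{N}+O(\delta^{2})$ transfers the problem to $\mathrm{diag}(a_{1}+b_{1}\delta,\ldots,a_{N_{1}}+b_{N_{1}}\delta)+R(\delta)$ with $R(\delta)=O(\delta^{2})$, and the eigenvalues of that block are exactly those perturbed from $\sigma(A_{1})$. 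Two remarks. First, your appeal to Lemma A.1 is unnecessary and slightly off target: A.1 is stated for analytic perturbations, whereas A.2 assumes only continuity of $A_{1}(\delta)$, $A_{2}(\delta)$, $A_{3}(\delta)$; fortunately your construction of $Q(\delta)$ uses nothing beyond continuity and norm estimates, so the proof survives with the reference to A.1 simply deleted. Relatedly, block-diagonality of $Q(\delta)^{-1}A(\delta)Q(\delta)$ follows not merely from $\mathcal{P}(\delta)A(\delta)=A(\delta)\mathcal{P}(\delta)$ but from the intertwining identity $\mathcal{P}(\delta)Q(\delta)=Q(\delta)\mathcal{P}(0)$, which your $Q(\delta)$ does satisfy (a one-line computation) and which you should state explicitly. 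Second, in the final step the phrase ``classical perturbation theory'' in the resonant case $a_{i}=a_{j}$, $b_{i}\neq b_{j}$ should be replaced by an explicit localization-plus-counting argument: Bauer--Fike with the diagonal comparison matrix $\mathrm{diag}(a_{j}+b_{j}\delta)$ (whose eigenvector matrix is the identity, so the condition number is $1$), or Gershgorin, places every eigenvalue in a disk of radius $O(\delta^{2})$ about some $a_{j}+b_{j}\delta$; grouping indices by distinct pairs $(a_{j},b_{j})$, these disks are pairwise disjoint once $\delta$ is small enough that $O(\delta^{2})<\tfrac{1}{2}\min|b_{i}-b_{j}|\,|\delta|$, and the homotopy $t\mapsto\mathrm{diag}(a_{j}+b_{j}\delta)+tR(\delta)$, $t\in[0,1]$, shows each group captures exactly the right number of eigenvalues, fixing the labelling $\lambda_{j}(\delta)=a_{j}+b_{j}\delta+O(\delta^{2})$. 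Your sketch anticipates precisely this, so the remaining gaps are cosmetic rather than substantive.
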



\section*{\bf{Declarations}}

{\bf{Ethical Approval}}

 Not applicable.

 \

{\bf{Competing interests}}

 There are no financial or non-financial competing interests.

\

{\bf{Authors' contributions}}

S. Chen and J. Huang wrote the paper. All authors read and approved the manuscript.

\

{\bf{Funding}}

This work was partly supported by the National Natural Science Foundation of China (Grant No. 12101253, 12231008)
and the Scientific Research Foundation of CCNU (Grant No. 31101222044).

\

{\bf{Availability of data and materials}}

Data sharing not applicable to this article as no datasets were
generated or analysed during the current study.

\bibliographystyle{amsplain}

\end{document}